%

\documentclass[aop,MSNbibl,seceqn,dvips]{arximspdf}
\usepackage{mathrsfs}

%

\doi{10.1214/12-AOP783} 
\volume{41}
\issue{6}
\pubyear{2013}
\firstpage{4317}
\lastpage{4341}

\makeatletter
\newcommand{\eqref}[1]{(\ref{#1})}
\newcommand{\mathset}[1]{\mathbb{#1}}
\newcommand{\F}{\mathcal{F}}
\newcommand{\ca}{c\`adl\`ag}
\newcommand{\R}{\mathset R}
\newcommand{\abs}[2][]{#1\vert #2#1\vert}
\newcommand{\norm}[2][]{#1\Vert #2#1\Vert}
\newcommand{\inner}[2][]{#1\langle #2 #1\rangle}
\newcommand{\N}{\mathset N}
\newcommand{\de}{D([0,1];E)}

\def\E{\mathbb{E}}
\def\P{\mathbb{P}}
\def\1{\mathbf{1}}

\def\eid{\stackrel{d}{=}} 
\def\cid{\stackrel{d}{\rightarrow}} 
\def\cw{\stackrel{w}{\rightarrow}} 

\newtheorem{lemma}{Lemma}[section]

\newtheorem{theorem}[lemma]{Theorem}
\newtheorem{corollary}[lemma]{Corollary}
\newproclaim{example}[lemma]{Example}
\newproclaim{condition}[lemma]{Condition}
\newproclaim{definition}[lemma]{Definition}
\newproclaim{remark}[lemma]{Remark}
\makeatother

\begin{document}
\begin{frontmatter}

\title{On the uniform convergence of random series in Skorohod space
and representations of c\`adl\`ag infinitely divisible processes}
\runtitle{Random series in Skorohod space}

\begin{aug}
\author[A]{\fnms{Andreas} \snm{Basse-O'Connor}\ead[label=e1]{basse@imf.au.dk}}
\and
\author[B]{\fnms{Jan} \snm{Rosi\'nski}\corref{}\ead[label=e2]{rosinski@math.utk.edu}}
\runauthor{A. Basse-O'Connor and J. Rosi\'nski}
\affiliation{Aarhus University and University of Tennessee, and\break
University of Tennessee}
\address[A]{Department of Mathematics\\
Aarhus University\\
8000 Aarhus C\\
Denmark\\
and\\
Department of Mathematics\\
University of Tennessee\\
Knoxville, Tennessee 37996\\
USA\\
\printead{e1}}

\address[B]{Department of Mathematics\\
University of Tennessee\\
Knoxville, Tennessee 37996\\
USA\\
\printead{e2}}
\end{aug}

\received{\smonth{11} \syear{2011}}
\revised{\smonth{5} \syear{2012}}

%
\begin{abstract}
Let $X_n$ be independent random elements in the Skorohod space
$D([0,1]; E)$ of c\`adl\`ag functions taking values in a separable
Banach space $E$. Let $S_n = \sum_{j=1}^{n} X_j$. We show that if $S_n$
converges in finite dimensional distributions to a c\`adl\`ag process,
then $S_n + y_n$ converges a.s. pathwise uniformly over $[0,1]$, for
some $y_n \in D([0,1]; E)$. This result extends the It\^o--Nisio
theorem to the space $D([0,1]; E)$, which is surprisingly lacking in
the literature even for
$E=R$. The main difficulties of dealing with $D([0,1]; E)$ in this
context are its nonseparability under the uniform norm and the
discontinuity of addition under Skorohod's $J_1$-topology.

We use this result to prove the uniform convergence of various series
representations of c\`adl\`ag infinitely divisible processes. As a
consequence, we obtain explicit representations of the jump process,
and of related path functionals, in a general non-Markovian setting.
Finally, we illustrate our results on an example of stable processes.
To this aim we obtain new criteria for such processes to have c\`adl\`
ag modifications, which may also be of independent interest.
\end{abstract}

%
\begin{keyword}[class=AMS]
\kwd[Primary ]{60G50}
\kwd[; secondary ]{60G52}
\kwd{60G17}
\end{keyword}

\begin{keyword}
\kwd{It\^o--Nisio theorem}
\kwd{Skorohod space}
\kwd{infinitely divisible processes}
\kwd{stable processes}
\kwd{series representations}
\end{keyword}
%

\end{frontmatter}

\section{Introduction}

The It\^o--Nisio theorem \cite{Ito-Nisio-theorem} plays a fundamental
role in the study of
series of independent random vectors in separable Banach spaces; see,
for example,
Araujo and Gin{\'e} \cite{A-G-book}, Linde \cite{Linde}, Kwapie{\'n}
and Woyczy{\'n}ski \cite{KwapienRS} and Ledoux and Talagrand \cite
{Talagrand}. In particular, it implies that various series expansions
of a Brownian motion, and of other sample continuous Gaussian
processes, converge uniformly pathwise, which was the original
motivation for the theorem; see Ikeda and Taniguchi \cite{Ikeda-T}.

In order to obtain the corresponding results for series expansions of
sample discontinuous processes, it is natural to consider an extension
of the It\^o--Nisio theorem to the Skorohod space $D[0,1]$ of \ca\
functions. A deep, pioneering work in this direction was done by
Kallenberg \cite{Kallenbergthm}. Among other results, he showed that
if a series
of independent random elements in $D[0,1]$ converges in distribution in
the Skorohod topology, then it ``usually'' converges a.s. uniformly on
$[0,1]$; see Section \ref{I-N-D} for more details.
See also related work \cite{BR88}.
Notice that $D[0,1]$ under the uniform norm $\|\cdot\|$ is not
separable, and such basic random elements in $D[0,1]$ as a Poisson
process are not strongly measurable functions. Therefore, we may
formulate our problem concerning
$(D[0,1], \|\cdot\|)$ in a more general framework of nonseparable
Banach spaces as follows.

Consider a Banach space $(F,\|\cdot\|)$ of functions from a set $T$
into $\R$ such that all evaluation functionals
${\delta_t}\dvtx {x}\mapsto{x(t)}$
are continuous. Assume, moreover, that the map $x\mapsto
\Vert x\Vert $ is measurable with respect to the cylindrical $\sigma
$-algebra $\mathcal{C}(F)=\sigma(\delta_t\dvtx t\in T)$ of $F$. Let $\{
X_j\}
$ be a sequence of independent and \emph{symmetric} stochastic processes
indexed by $T$ with paths in $F$ and set $S_n=\sum_{j=1}^n X_j$. That
is, $S_n$ are $\mathcal{C}(F)$-measurable random vectors in $F$. We
will say that the It\^o--Nisio theorem holds for $F$ if the following
two conditions are equivalent:
\begin{longlist}[(ii)]
\item[(i)]
$S_n$ converges in finite dimensional distributions
to a process with paths in~$F$;
\item[(ii)]
$S_n$ converges a.s. in $(F, \|\cdot\|)$
\end{longlist}
for all sequences $\{X_j\}$ as above.

If $F$ is separable, the It\^o--Nisio theorem gives the equivalence of
(i) and (ii), and in this case $\mathcal{C}(F)=
\mathcal{B}(F)$.
For nonseparable Banach spaces we have examples, but not a general
characterization of spaces for which the It\^o--Nisio theorem holds,
despite the fact that many interesting path spaces occurring in
probability theory are nonseparable.
For instance, the It\^o--Nisio theorem holds for $\mathrm{BV}_1$, the space of
right-continuous functions of bounded
variation, which can be deduced from the proof of Jain and Monrad \cite
{GauQua},
Theorem~1.2, by a conditioning argument. However, this theorem fails to
hold for $F=\ell^\infty(\N)$, and it is neither valid for $\mathrm{BV}_p$, the
space of right-continuous functions of bounded $p$-variation with
$p>1$, or for $C^{0,\alpha}([0,1])$, the space of H\"older continuous
functions of order $\alpha\in(0,1]$; see Remark~\ref{remark-l-infty}.
The case of $F=D[0,1]$ under the uniform norm has been open. Notice
that Kallenberg's result \cite{Kallenbergthm} cannot be applied
because the convergence in (i) is much weaker than the
convergence in the Skorohod topology; see also Remark~\ref{convextight}.

In this paper we show that the It\^o--Nisio theorem holds for the space
$\de$ of \ca\ functions from $[0,1]$ into a separable Banach space $E$
under the uniform norm (Theorem~\ref{thm1}). From this theorem we
derive a simple proof of the above mentioned result of Kallenberg
(Corollary~\ref{cor-1} below). Furthermore,\vadjust{\goodbreak} using Theorem~\ref{thm1} we
establish the uniform convergence of shot noise-type expansions of \ca\ Banach
space-valued infinitely divisible processes (Theorem~\ref
{thID1}). In the last part of this paper, we give applications to
stable processes as an example; see Section~\ref{sec-4}. To this aim,
we establish a new sufficient criterion for the existence of \ca\
modifications of general symmetric stable processes (Theorem~\ref
{pro-ca}) and derive explicit expressions and distributions for several
functionals of the corresponding jump processes.

%
%
\subsection*{Definitions and notation}
In the following, $(\Omega,\F,\P)$ is a complete probability space,
$(E,\abs{ \cdot}_E )$ is a separable Banach space and $D([0,1];E)$ is
the space of \ca\ functions from $[0,1]$ into $E$. (C\`adl\`ag means
right-continuous with left-hand limits.) The space $\de$ is equipped
with the cylindrical $\sigma$-algebra, that is, the smallest $\sigma
$-algebra under which all evaluations $x \mapsto x(t)$ are measurable
for $t\in[0,1]$. A random element in $D([0,1];E)$ is a random function
taking values in $D([0,1];E)$ measurable for the cylindrical $\sigma$-algebra.
$\Vert{x}\Vert=\sup_{t\in[0,1]} \abs{x(t)}_E$ denotes the uniform norm of
$x \in\de$ and $\Delta x(t)=x(t)-x(t-)$ is the size of jump of $x$ at
$t$; the mappings $x \mapsto\Vert{x}\Vert$ and $x \mapsto\Delta x(t)$ are
measurable. For more information on $\de$ we refer to Billingsley
\cite
{Billingsley} and Kallenberg \cite{Kallenberg}. Integrals of
$E$-valued functions
are defined in the Bochner sense. By $\cid, \cw, \eid$ and $\mathcal
L(X)$ we denote, respectively, convergence in distribution, convergence
in law, equality in distribution and the law of the random element $X$.

\section{It\^o--Nisio theorem for $\de$}
\label{I-N-D}

Let $\{X_j\}$ be a sequence of independent random elements in $\de$
and let $S_n=\sum_{j=1}^n X_j$. We study the convergence of $S_n$ in
$\de$ with respect to the uniform topology.

Kallenberg \cite{Kallenbergthm} proved that in $D[0,1]$ endowed with the
Skorohod $J_1$-topology~($E=\R$), convergence a.s. and in distribution
of $S_n$ are equivalent. Moreover, if $S_n$ converges in distribution
relative to the Skorohod topology, then it converges uniformly a.s. under mild conditions, such as, for example, when the limit process
does not have a jump of nonrandom size and location. In concrete
situations, however, a verification of the assumption that $S_n$
converges in distribution in the Skorohod topology can perhaps be as
difficult as a direct proof of the uniform convergence. We prove the
uniform convergence of $S_n$ under much weaker conditions.

\begin{theorem}\label{thm1}
Suppose there exist a random element $Y$ in $\de$ and a dense subset
$T$ of $[0,1]$ such that $1 \in T$ and for any $t_1,\ldots, t_k \in T$
%
\begin{equation}
\label{fd} \bigl(S_n(t_1), \ldots, S_n(t_k)
\bigr) \cid\bigl(Y(t_1),\ldots, Y(t_k)\bigr)\qquad \mbox{as } n
\to \infty.
\end{equation}
Then there exists a random element $S$ in $\de$ with the same
distribution as $Y$ such that:
\begin{longlist}[(iii)]
\item[(i)]
$S_n \to S$ a.s. uniformly on $[0,1]$, provided
$X_n$ are symmetric.\vadjust{\goodbreak}
\item[(ii)]
If $X_n$ are not symmetric, then
%
\begin{equation}
\label{eqcen} S_n + y_n \to S \qquad\mbox{a.s. uniformly on
$[0,1]$}
\end{equation}
for some $y_n \in\de$ such that $\lim_{n\to\infty} y_n(t) =0$ for
every $t \in T$.
\item[(iii)]
Moreover, if the family $\{|S(t)|_E\dvtx t\in T\}$ is
uniformly integrable and the functions $t\mapsto\E ( X_n(t)
)$ belong to $\de$, then one can take in \eqref{eqcen} $y_n$
given by
%
\begin{equation}
\label{eqcen1} y_n(t)=\E \bigl( S(t)-S_n(t) \bigr).
\end{equation}
\end{longlist}
\end{theorem}

The next corollary gives an alternative and simpler proof of the above
mentioned result of Kallenberg \cite{Kallenbergthm}. Our proof relies
on Theorem
\ref{thm1}. Recall that the Skorohod $J_1$-topology on $\de$ is
determined by a metric
\[
\label{S-metric} d(x,y) = \inf_{\lambda\in\Lambda} \max \Bigl\{ \sup_{t\in[0,1]}
\bigl|x(t)-y\circ\lambda(t)\bigr|_E, \sup_{t\in[0,1]} \bigl|\lambda(t) - t\bigr|
\Bigr\},
\]
where $\Lambda$ is the class of strictly increasing, continuous
mappings of $[0,1]$ onto itself; see, for example, \cite{Billingsley},
page~124.

\begin{corollary}\label{cor-1}
If $S_n \cid Y$ in the Skorohod $J_1$-topology, and $Y$ does not have
a jump of nonrandom size and location, then $S_n$ converges a.s. uniformly on $[0,1]$.
\end{corollary}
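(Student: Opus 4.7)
The plan is to derive Corollary~\ref{cor-1} from Theorem~\ref{thm1}. First, let
\[
T = \{t \in [0,1] : \P(\Delta Y(t) = 0) = 1\} \cup \{1\},
\]
which is cocountable in $[0,1]$ (hence dense) and contains $1$. For $t_1, \dots, t_k \in T$ the projection $\pi_{t_1, \dots, t_k}: (\de, J_1) \to E^k$ is continuous at $\mathcal{L}(Y)$-a.e.\ paths, so the continuous mapping theorem applied to $S_n \cid Y$ in $J_1$ yields $(S_n(t_1), \dots, S_n(t_k)) \cid (Y(t_1), \dots, Y(t_k))$. Theorem~\ref{thm1}(ii) then produces deterministic $y_n \in \de$ and a random element $S$ with $\mathcal{L}(S) = \mathcal{L}(Y)$ such that $S_n + y_n \to S$ uniformly a.s.\ on $[0,1]$ and $y_n(t) \to 0$ for every $t \in T$.

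It is enough to show $\norm{y_n} \to 0$, since then $S_n = (S_n + y_n) - y_n \to S$ uniformly a.s. A first observation is that $\sup_n \norm{y_n} < \infty$: the uniform norm is $J_1$-continuous on $\de$, so $\norm{S_n}$ is tight, while $\norm{S_n + y_n}$ converges a.s.; any unbounded subsequence of $\norm{y_n}$ would therefore force $\norm{S_n} \to \infty$ in probability, contradicting tightness.

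For the key step I argue by contradiction: assuming $\norm{y_n} \not\to 0$, pass to a subsequence along which $\abs{y_n(t_n)}_E \geq \epsilon$ for some $\epsilon > 0$ and $t_n \to t^* \in [0,1]$; by symmetry I treat $t_n \downarrow t^*$ (the case $t_n \uparrow t^*$ is analogous using left limits). Uniform convergence together with right-continuity of $S$ yields $(S_n + y_n)(t_n) \to S(t^*)$ a.s., hence
\[
\mathcal{L}(S_n(t_n)) * \delta_{y_n(t_n)} \longrightarrow \mathcal{L}(Y(t^*)) \quad \text{weakly in } E.
\]
On the other hand, Skorohod's representation theorem for the $J_1$-convergence $S_n \cid Y$ produces random elements $\tilde S_n \to \tilde Y$ a.s.\ in $J_1$ on an auxiliary probability space with matching marginal laws; a careful analysis of the time-changes $\lambda_n \to \mathrm{id}$ identifies $\lim_n \mathcal{L}(S_n(t_n))$ along a further subsequence as either $\mathcal{L}(Y(t^*))$ (when $\lambda_n^{-1}(t_n) > t^*$ eventually) or $\mathcal{L}(Y(t^*-))$ (otherwise). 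Comparing the two descriptions yields, in the first case, that $y_n(t_n) \to 0$ in $E$ by a convergence-of-types argument, contradicting $\abs{y_n(t_n)}_E \geq \epsilon$; in the second case, the identity $\mathcal{L}(Y(t^*-)) * \delta_a = \mathcal{L}(Y(t^*))$ for $a = \lim y_n(t_n)$ gives $\Delta Y(t^*) \eid a$ with $a \neq 0$, i.e., a jump of nonrandom size at nonrandom location, contradicting the hypothesis on $Y$.

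The main obstacle is the precise identification of $\lim_n \mathcal{L}(S_n(t_n))$ for deterministic $t_n \downarrow t^*$ when $t^*$ is a fixed discontinuity of $Y$: the Skorohod time-changes can place the preimage $\lambda_n^{-1}(t_n)$ on either side of $t^*$ in an $\omega$-dependent way, and handling both resulting cases --- together with the convergence-of-types argument for the deterministic shift $y_n(t_n) \in E$ --- is the technical heart of the proof; this is precisely where the no-jump-of-nonrandom-size-and-location hypothesis on $Y$ is essential.
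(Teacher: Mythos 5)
Your overall strategy (get finite-dimensional convergence on the set of non-fixed-discontinuity times, invoke Theorem~\ref{thm1}\eqref{no-2}, and then rule out $\norm{y_n}\not\to 0$ by looking at $y_n(t_n)$ with $t_n\to t^*$) is the same as the paper's, but the two steps that actually carry the argument are not established. First, the claimed dichotomy ``along a further subsequence $\mathcal{L}(S_n(t_n))$ converges to either $\mathcal{L}(Y(t^*))$ or $\mathcal{L}(Y(t^*-))$'' is not only unproven (you yourself label it the technical heart and leave it at ``a careful analysis identifies''), it is false as stated: the Skorohod time changes $\lambda_n$ are $\omega$-dependent, so the event $\{\lambda_n^{-1}(t_n)>t^*\}$ varies with $\omega$ and $n$, and subsequential limit laws of $S_n(t_n)$ are in general laws of random variables taking the value $Y(t^*)$ on part of the space and $Y(t^*-)$ on the rest, not one of the two marginal laws. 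This is precisely why the paper does not decouple the two representations: it couples the triple $\big(S_n,\,S_n+y_n,\,S_n(t_n)+y_n(t_n)\big)$, applies the Skorohod representation theorem to that triple, identifies the limit as $(U,U,U(t-))$ with $U\eid Y$, and then runs the time-change analysis pathwise, so that for a.e.\ $\omega$ the cluster points of the \emph{deterministic} sequence $y_n(t_n)$ lie in $\{0,-\Delta U(t)(\omega)\}$; since $\abs{y_n(t_n)}_E\ge\epsilon$, this forces $y_n(t_n)\to-\Delta U(t)$ a.s.\ and hence $\Delta U(t)$ to be a nonrandom $c\ne 0$.

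Second, even granting your dichotomy, the contradiction in your ``second case'' does not follow. From $\mathcal{L}(Y(t^*-))*\delta_a=\mathcal{L}(Y(t^*))$ you conclude ``$\Delta Y(t^*)\eid a$, i.e.\ a jump of nonrandom size and location,'' but equality of the two one-dimensional marginals up to a shift says nothing about the law of the difference $Y(t^*)-Y(t^*-)$ (for example $Y(t^*)=a-Y(t^*-)$ with $Y(t^*-)$ symmetric satisfies the convolution identity with a genuinely random jump). The hypothesis to be contradicted is the almost-sure statement $\P(\Delta Y(t)=c)=1$ for some fixed $t$ and $c\ne 0$, and a marginal-law argument cannot reach it; the pathwise coupling in the paper is what upgrades the distributional information to the a.s.\ identity $\Delta U(t)=c$. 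So the proposal has a genuine gap at both the identification-of-limits step and the final contradiction, and closing them essentially requires the joint Skorohod-coupling argument of the paper rather than the convergence-of-types route.
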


\begin{pf}
Since $S_n \cid Y$, condition \eqref{fd} holds for
\[
T=\bigl\{t\in(0,1)\dvtx \P\bigl(\Delta Y(t) = 0\bigr)=1\bigr\} \cup\{0, 1\} ;
\]
see \cite{Billingsley}, Section~13. By Theorem~\ref{thm1}(ii)
there exist $\{y_n\}\subseteq\de$ and $S\eid Y$ such that $\Vert
{S_n+y_n-S}\Vert\to0$ a.s. Moreover, $\lim_{n\to\infty} y_n(t)=0$ for
every $t \in T$. We want to show that $\Vert{y_n}\Vert\to0$.

Assume to the contrary that $\limsup_{n\to\infty} \Vert{y_n}\Vert>
\varepsilon
>0$. Then there exist a subsequence $N'\subseteq\N$ and a monotone
sequence $\{t_n\}_{n\in N'} \subset[0,1]$ with $t_n\to t$ such that
$\abs{y_n(t_{n})}_E \geq\varepsilon$ for all $n\in N'$.
Assume that $t_n\uparrow t$ (the case $t_n\downarrow t$ follows
similarly). From the uniform convergence we have that
$S_n(t_n)+y_n(t_n)\to S(t-)$ a.s. ($n\to\infty$, $n\in N'$), and since
$S_n+y_n \cid S$ also in $\de$ endowed with the Skorohod topology, the sequence
\[
\label{} W_n:= \bigl(S_n, S_n+y_n,S_n(t_n)+y_n(t_n)
\bigr),\qquad  n\in N',
\]
is tight in $\de^2\times E$ in the product topology. Passing to a
further subsequence, if needed, we may assume that $\{W_n\}_{n\in N'}$
converges in distribution. By the Skorohod Representation theorem (see, e.g.,
\cite{Billingsley}, Theorem~6.7), there exist random elements $\{Z_n\}_{n\in N'}$
and $Z$ in $\de^2\times E$ such that $Z_n\eid W_n$ and
$Z_n\to Z$ a.s.  From the measurability of addition and the evaluation
maps, it follows that $Z_n$ are on the form
\[
\label{} Z_n= \bigl(U_n,U_n+y_n,U_n(t_n)+y_n(t_n)
\bigr)\vadjust{\goodbreak}
\]
for some random elements $U_n \eid S_n$ in $\de$. We claim that $Z$ is
on the form
%
\begin{equation}
\label{rep-Z} Z= \bigl(U,U,U(t-) \bigr)
\end{equation}
for some random element $U \eid S$ in $\de$. To show this write
$Z=(Z^1,Z^2,Z^3)$ and note that $Z^1\eid Z^2\eid S$. Since the
evaluation map $x \mapsto x(s)$ is continuous at any $x$ such that
$\Delta x(s)=0$ (see
Billingsley \cite{Billingsley}, Theorem 12.5)
for each $s\in T$ with probability one
\[
\label{} Z^1(s)=\lim_{n\to\infty,  n\in N'} U_n(s)=
\lim_{n\to\infty,  n
\in
N'} \bigl[U_n(s)+y_n(s)
\bigr]=Z^2(s),
\]
which shows that $Z^1=Z^2$ a.s. Since $(S_n+y_n,S_n(t_n)+y_n(t_n))\cid
(S,S(t-))$ we {\spaceskip=0.19em plus 0.05em minus 0.02em have that $(S,S(t-))\eid(Z^2,Z^3)$.
The latter yields
$(S(t-),S(t-))\eid\break(Z^2(t-),Z^3)$,} so that $Z^3=Z^2(t-)$ a.s. This
shows \eqref{rep-Z} with $U:=Z^1\eid S$, and with probability one we
have that
\[
\label{} U_n\to U \quad\mbox{and}\quad U_n(t_n)+y_n(t_n)
\to U(t-),\qquad n\to\infty, n\in N'.
\]
We may choose a sequence $\{\lambda_n(\cdot,\omega)\}_{n\in N'}$ in
$\Lambda$ such that as $n\to\infty$,
\[
\label{} \sup_{s\in[0,1]}\bigl|U_n(s)-U\bigl(\lambda_n(s)
\bigr)\bigl|_E+\sup_{s\in
[0,1]} \bigl|\lambda_n(s)-s\bigr|\to0
\qquad\mbox{a.s.}
\]
Therefore,
%
\begin{eqnarray}
\label{con-U} \qquad && \bigl|U\bigl(\lambda_n(t_n)
\bigr)-U(t-)+y_n(t_n)\bigr|_E
\nonumber
\\[-8pt]
\\[-8pt]
\nonumber
 &&\qquad \leq\bigl|U\bigl(
\lambda_n(t_n)\bigr)-U_n(t_n)\bigr|_E+\bigl|U_n(t_n)+y_n(t_n)-U(t-)\bigr|_E
\to0\quad \mbox{a.s.}
\end{eqnarray}
Since $\lambda_n(t_n)\to t$ a.s. as $n\to\infty$, $n\in N'$, the
sequence $\{U(\lambda_n(t_n))\}_{n\in N'}$ is relatively compact in $E$
with at most two cluster points, $U(t)$ or $U(t-)$. By \eqref{con-U},
the cluster points for $\{y_n(t_n)\}_{n\in N'}$ are $-\Delta U(t)$ or
$0$ and since $\abs{y_n(t_n)}_E\geq\varepsilon$, we have that
$y_n(t_n)\to-\Delta U(t)$ a.s., $n\in N'$. This shows that $\Delta
U(t)=c$ for some nonrandom $c\in E\setminus\{0\}$, and since $U\eid
S\eid Y$, we have a contradiction.
\end{pf}

To prove Theorem~\ref{thm1} we need the following lemma:

\begin{lemma}\label{lemma1}
Let $\{x_i\} \subseteq\de$ be a deterministic sequence, and let $\{
\varepsilon_i\}$ be i.i.d. symmetric Bernoulli variables. Assume that
there is a dense set $T \subseteq[0,1]$ with $1\in T$ and a random element $S$ in
$\de$ such that for each $t \in T$,
%
\begin{equation}
\label{eq1} S(t) = \sum_{i=1}^{\infty}
\varepsilon_i x_i(t) \qquad\mbox{a.s.}
\end{equation}
Then
%
\begin{equation}
\label{eq2} \lim_{i \to\infty} \|x_i\| = 0.
\end{equation}
\end{lemma}

\begin{pf}
Suppose to the contrary, there is an $\varepsilon>0$ such that
%
\begin{equation}
\label{eq3} \limsup_{i \to\infty} \|x_i\| > \varepsilon.
\end{equation}
Choose $i_1 \in\N$ and $t_1\in T$ such that $\abs{x_{i_1}(t_1)}_E>
\varepsilon$ and then inductively choose
$i_n \in\N$ and $t_n\in T$, $n \ge2$, such that
\[
\label{eq4} \bigl|x_{i_n}(t_n)\bigr|_E > \varepsilon
\quad\mbox{and} \quad\bigl|x_{i_n}(t_k)\bigr|_E < \varepsilon/2
\qquad\mbox{for all } k < n.
\]
This is always possible in view of \eqref{eq1} and \eqref{eq3} because
$\lim_{i \to\infty} x_i(t)=0$ for each $t \in T$. It follows that all
$t_n$'s are distinct. The sequence $\{t_n\}_{n \in\N}$ contains a
monotone convergent subsequence $\{t_{n}\}_{n \in N'}$, $ \lim_{n \to
\infty,  n \in N'} t_{n} = t$. Then for every $n>k$, $k, n \in N'$,
%
\begin{eqnarray}
\label{L} \P \bigl(\bigl|S(t_{n}) -S(t_{k})\bigr|_E >
\varepsilon/2 \bigr) &=& \P \Biggl(\Biggl\vert\sum_{i=1}^{\infty}
\varepsilon_i\bigl[x_i(t_{n})
-x_i(t_{k})\bigr]\Biggr\vert_E > \varepsilon/2
\Biggr)
\nonumber
\\[-8pt]
\\[-8pt]
\nonumber
& \ge&\frac{1}{2} \P \bigl(\bigl\vert\varepsilon_{i_n}
\bigl[x_{i_n}(t_{n}) -x_{i_n}(t_{k})\bigr]
\bigr\vert_E > \varepsilon/2 \bigr) = \frac{1}{2} ,
\end{eqnarray}
which follows from the fact that if $(X,Y) \eid(X,-Y)$, then for all
$\tau>0$, $\P(\norm{X} >\tau)= \P(\norm{(X+Y)+(X-Y)} >2\tau) \le
2 \P
(\norm{X+Y} >\tau)$.
Bound \eqref{L} contradicts the fact that $S$ is \ca\  and thus proves
\eqref{eq2}.
\end{pf}

\begin{pf*}{Proof of Theorem~\ref{thm1}} First we construct a random
element $S$ in $\de$ such that $S \eid Y$ and
%
\begin{equation}
\label{eqS} S(t)=\lim_{n\to\infty} S_n(t) \qquad\mbox{a.s. for every } t
\in T.
\end{equation}
By the It\^o--Nisio theorem~\cite{Ito-Nisio-theorem}, $ S^*(t)=\lim_{n\to\infty} S_n(t)$
exists a.s. for $t \in T$.
Put $ S^*(t)=\lim_{r\downarrow t,  r \in T} S^*(r)$ when $t \in
[0,1]\setminus T$, where the limit is in probability [the limit exists
since $ (S^*(r),S^*(s)) \eid(Y(r),Y(s))$ for all $r,s \in T$ and $Y$
is right-continuous].
Therefore, the process $ \{S^*(t)\}_{t\in[0,1]}$ has the same finite
dimensional distributions as $ \{Y(t)\}_{t\in[0,1]}$ whose paths are
in $\de$. Since the cylindrical $\sigma$-algebra of $ \de$  coincides
with the Borel $\sigma$-algebra under the Skorohod topology, by
Kallenberg \cite{Kallenberg}, Lemma 3.24,  there is a process $ S=\{
S(t)\}_{t\in
[0,1]}$, on the same probability space as $S^*$, with all paths in $\de
$ and such that $ \P(S(t)=S^*(t))=1$ for every $t\in[0,1]$.

(i): Let $n_1<n_2<\cdots$ be an arbitrary subsequence in $\N
$ and
$\{\varepsilon_i\}$ be i.i.d.  symmetric Bernoulli variables defined on
$(\Omega',\F',\P')$. By the symmetry, $W_k$ in $\de$ given by
\[
W_k(t) = \sum_{i=1}^{k}
\varepsilon_i \bigl(S_{n_{i}}(t) - S_{n_{i-1}}(t)\bigr), \qquad t
\in[0,1],
\]
($S_{n_0}\equiv0$) has the same distribution as $S_{n_k}$. By the
argument stated at the beginning of the proof, there is a process $W=\{
W(t)\}_{t\in[0,1]}$ with paths in $\de$, defined on $(\Omega'\times
\Omega,\F'\otimes\F, \P'\otimes\P)$, such that $W \eid Y$ and
\[
\label{eqW} W(t) = \sum_{i=1}^{\infty}
\varepsilon_i \bigl(S_{n_{i}}(t) - S_{n_{i-1}}(t)\bigr)
\qquad\mbox{a.s. for every } t\in T.
\]

Choose a countable set $T_0\subset T$, dense in $[0,1]$ with $1\in T_0$, and $\Omega_0
\subseteq\Omega$, $\P(\Omega_0)=1$, such that for each $\omega\in
\Omega_0$,  $\P'\{\omega'\dvtx W(\cdot, \omega', \omega) \in\de\}
=1$ and
\[
\label{} W(t, \cdot, \omega) = \sum_{i=1}^{\infty}
\varepsilon_i \bigl(S_{n_{i}}(t,\omega ) - S_{n_{i-1}}(t,
\omega)\bigr)\qquad \P'\mbox{-a.s. } \mbox{for every $t\in
T_0$}.
\]
By Lemma~\ref{lemma1}, $\lim_{i\to\infty} \|S_{n_{i}}(\omega) -
S_{n_{i-1}}(\omega)\| = 0$, which implies that $\|S_n -S\| \to0$ in
probability. By the L\'evy--Octaviani inequality
\cite{KwapienRS}, Proposition 1.1.1(i), which holds for measurable
seminorms on linear measurable spaces, $\|S_n - S\| \to0$ almost surely.

(ii):
Define on the product probability space $(\Omega\times\Omega, \F
\otimes\F, \P\otimes\P)$ the following: $ \tilde{X}_n(t; \omega,
\omega')= X_n(t, \omega)-X_n(t, \omega')$, $ \tilde{S}(t; \omega,
\omega')=S(t, \omega)-S(t, \omega')$ and $ \tilde{S}_n = \sum_{k=1}^n
\tilde
{X}_k$, where the random element $S$ in $\de$ is determined by~\eqref
{eqS}. By (i), $ \tilde{S}_n \to\tilde{S}$ a.s. in $\|
\cdot\|
$. From Fubini's theorem we infer that there is an $\omega'$ such that
the functions $x_n(\cdot)= X_n(\cdot, \omega')$ and $y(\cdot)=
S(\cdot
,\omega')$ belong to $\de$ and $\sum_{k=1}^n (X_k - x_k) \to S - y$
a.s. in $\|\cdot\|$. Thus \eqref{eqcen} holds with $ y_n=y-\sum_{k=1}^n x_k$, which combined with \eqref{eqS} yields $\lim_{n \to
\infty} y_n(t)=0$ for every $t \in T$.

(iii):
Let us assume for a moment that $\E S(t)= \E S_n(t) =0$ for all $t\in
T$ and $n \in\N$. We want to show that $y_n=0$ satisfies \eqref
{eqcen}. Since $S(t)\in L^1(E)$ we have that $S_n(t) \to S(t)$ in
$L^1(E)$ (cf. \cite{KwapienRS}, Theorem~2.3.2) and hence $S_n(t)=\E
[S(t) | \mathcal{F}_n]$ where $\mathcal{F}_n= \sigma(X_1,\ldots,X_n)$.
This shows that $\{S_n(t) \dvtx t\in T, n\in\N\}$ is uniformly integrable;
cf. \cite{JHJBook}, (6.10.1).
First we will prove that the sequence $\{y_n\}$ is uniformly bounded,
that is,
%
\begin{equation}
\label{eqfn-bdd} \sup_{n\in\N} \|y_n\|< \infty.
\end{equation}
Assume to the contrary that there exists an increasing subsequence $n_i
\in\N$ and $t_i\in T$ such that
%
\begin{equation}
\label{eqfn-unb} \bigl|y_{n_i}(t_i)\bigr|_E >
i^3,\qquad i \in\N.
\end{equation}
Define
\[
\label{} \mathbf{V}_n = \bigl(S_n(t_1),
\ldots, i^{-2}S_n(t_i), \ldots\bigr).
\]
$\mathbf{V}_n$ are random vectors in $c_0(E)$ since
\[
\label{} \E\limsup_{k\to\infty} \bigl|k^{-2}S_n(t_k)\bigr|_E
\le\lim_{k\to\infty} \sum_{i=k}^\infty
i^{-2} \E\bigl|S_n(t_i)\bigr|_E \le
\lim_{k\to\infty} M \sum_{i=k}^\infty
i^{-2} =0,
\]
where $M=\sup_{t\in T} \E|S(t)|_E$. By the same argument,
\[
\label{} \mathbf{V} = \bigl(S(t_1), \ldots, i^{-2}S(t_i),
\ldots\bigr)
\]
is a random vector in $c_0(E)$, and since $S_n(t_i) \to S(t_i)$ in
$L^1(E)$,  $\E\|\mathbf{V}_n-\mathbf{V}\|_{c_0(E)} \to0$.
Thus $\mathbf{V}_n \to\mathbf{V}$ a.s. in $c_0(E)$ by It{\^o} and
Nisio \cite{Ito-Nisio-theorem}, Theorem~3.1.

Since each $y_n$ is a bounded function,
\[
\label{} \mathbf{a}_n = \bigl(y_n(t_1),
\ldots, i^{-2}y_n(t_i), \ldots\bigr) \in
c_0(E).
\]
Also $\mathbf{V}_n + \mathbf{a}_n \to\mathbf{V}$ a.s. in $c_0(E)$ because
\[
\label{} \|\mathbf{V}_n + \mathbf{a}_n - \mathbf{V}
\|_{c_0(E)} \le\| S_n+y_n-S\| \to0.
\]
Hence $\mathbf{a}_n=(\mathbf{V}_n + \mathbf{a}_n)- \mathbf{V}_n \to0$
in $c_0(E)$. Since $\lim_{i\to\infty} \|\mathbf{a}_{n_i}\|_{c_0(E)} =
\infty$ by~\eqref{eqfn-unb}, we have a contradiction. Thus \eqref
{eqfn-bdd} holds.

Now we will show that
%
\begin{equation}
\label{eqfn-conv} \lim_{n\to\infty} \|y_n\| = 0.
\end{equation}
Assume to the contrary that there exists an $\varepsilon>0$, an increasing
subsequence \mbox{$n_i \in\N$}, and $t_i\in T$ such that
%
\begin{equation}
\label{eqfn-unc} \bigl|y_{n_i}(t_i)\bigr|_E >
\varepsilon,\qquad i \in\N.
\end{equation}
Since \eqref{eqfn-bdd} holds, $\{S_n(t)+y_n(t) \dvtx t\in T, n\in\N\}$ is
uniformly integrable. Passing to a subsequence, if necessary, we may
assume that $\{t_i\}$ is strictly monotone and converges to some $t\in[0,1]$.
It follows from \eqref{eqcen} that $S_{n_i}(t_i) + y_{n_i}(t_i) \to Z$
a.s. in $E$, where $Z=S(t)$ or $Z=S(t{-})$. By the uniform
integrability the convergence also holds in $L^1(E)$, thus
$y_{n_i}(t_i) \to\E Z=0$, which contradicts \eqref{eqfn-unc}.

We proved \eqref{eqfn-conv}, so that \eqref{eqcen} holds with $y_n=0$
when $\E S(t) = \E S_n(t)=0$ for all $t\in T$ and $n \in\N$.
In the general case, notice that $\E S(\cdot) \in\de$, so that $S-\E S
\in\de$. From the already proved mean-zero case,
\[
\label{} \sum_{k=1}^n (X_k -
\E X_k) \to S - \E S \qquad\mbox{a.s. uniformly on [0,1], }
\]
which gives \eqref{eqcen} and \eqref{eqcen1}.
\end{pf*}

Next we will show that the It\^o--Nisio theorem does not hold in many
interesting nonseparable Banach spaces. From this perspective, the
spaces $\mathrm{BV}_1$ and $(\de, \|\cdot\|)$ are exceptional. We will use the
following notation.\vadjust{\goodbreak}

For $p\geq1$, $\mathrm{BV}_p$ is the space of right-continuous functions
${f}\dvtx[0,1]\to{\R}$
of bounded $p$-variation with $f(0)=0$ equipped with
the norm
\[
\label{} \norm{f}_{\mathrm{BV}_p}=\sup \Biggl\{ \Biggl(\sum
_{j=1}^n \bigl|f(t_j)-f(t_{j-1})\bigr|^p
\Biggr)^{1/p}\dvtx n\in\mathset N, 0= t_0\leq\cdots \leq
t_n=1 \Biggr\}.
\]
For $\alpha\in(0,1]$, $C^{0,\alpha}([0,1])$ is the space of $\alpha
$-H\"older continuous functions
${f}\dvtx [0,1]\to{\R}$ with $f(0)=0$
equipped with the norm
\[
\label{} \norm{f}_{C^{0,\alpha}}=\sup_{s,t\in[0,1]\dvtx s\neq t} \frac{\abs
{f(t)-f(s)}}{\abs{t-s}^\alpha}.
\]
Moreover, $\ell^\infty(\N)$ is the space of real sequences $\mathbf
a=\{
a_k\}_{k\in\N}$ with the norm $\norm{\mathbf a}_{\ell^\infty
}:=\sup_{k\in\N} \abs{a_k}<\infty$.

\begin{remark}\label{remark-l-infty}
In the following we will show that the It\^o--Nisio theorem is not
valid for the following nonseparable Banach spaces: $\ell^\infty(\N)$,
$\mathrm{BV}_p$ for $p>1$ and $C^{0,\alpha}([0,1])$ for $\alpha\in(0,1]$.

For all $p>1$ set $r=4^{[p/(p-1)+1]}$ where $[ \cdot]$ denotes the
integer part. For $j\in\N$ let
\[
\label{} f_j(t)=r^{-j/p}\log^{-1/2}(j+1)\sin
\bigl(r^j \pi t\bigr),\qquad t\in[0,1],
\]
$\{Z_j\}$ be i.i.d. standard Gaussian random variables, and $X=\{X(t)\}_{t\in[0,1]}$ be given by
%
\begin{equation}
\label{se-X} X(t)=\sum_{j=1}^\infty
f_j(t) Z_j \qquad\mbox{a.s. }
\end{equation}
According to Jain and Monrad \cite{JainMonradBp}, Proposition~4.5, $X$
has paths in
$\mathrm{BV}_p$, but series~\eqref{se-X} does not converge in $\mathrm{BV}_p$. This shows
that the It\^o--Nisio theorem is not valid for $\mathrm{BV}_p$ for $p>1$. A
closer inspection of
\cite{JainMonradBp}, Proposition~4.5, reveals that~$X$, given by
\eqref
{se-X}, has paths in $C^{0,1/p}([0,1])$ and since $\|\cdot\|_{\mathrm{BV}_p}\leq
\|\cdot\|_{C^{0,1/p}}$, the It\^o--Nisio theorem is not valid for
$C^{0,\alpha}([0,1])$ with $\alpha\in(0,1)$.

For fixed $p>1$ choose a sequence $\{x_n^*\}_{n\in\N}$ of continuous
linear mappings from $\mathrm{BV}_p$ into $\R$, each of the form
\[
\label{} x\mapsto\sum_{i=1}^k
\alpha_i \bigl(x(t_i)-x(t_{i-1}) \bigr),
\]
where $k\in\N$, $(\alpha_i)_{i=1}^k \subseteq\R$, $\sum_{i=1}^k
\abs
{\alpha_i}^q\leq1$ with $q:=p/(p-1)$ and $0=t_0<\cdots<t_k=1$, such
that
\[
\label{} \norm{f}_{\mathrm{BV}_p}=\sup_{n\in\N}\bigl \vert x_n^*(f)
\bigr\vert \qquad\mbox{for all }f\in \mathrm{BV}_p.
\]
Set $Y(n)=x^*_n(X)$ and $b_j(n)=x^*_n(f_j)$ for all $n,j\in\N$.
Process $Y=\break\{Y(n)\}_{n\in\N}\in\ell^\infty(\N)$ a.s., $b_j=\{
b_j(n)\}_{n\in\N}\in\ell^\infty(\N)$, and since\vadjust{\goodbreak} each $x_n^*$ only depends on
finitely many coordinate variables, we have that
\[
\label{} Y(n)=\sum_{j=1}^\infty
Z_j b_j(n)\qquad \mbox{a.s. for all }n\in\N.
\]
By the identity
\[
\label{} \Biggl\Vert\sum_{j=r}^m
Z_j b_j\Biggr \Vert_{\ell^\infty}= \Biggl\Vert\sum
_{j=r}^m Z_j f_j
\Biggr\Vert_{\mathrm{BV}_p} \qquad\mbox{for }1\leq r<m,
\]
we see that the sequence $\{\sum_{j=1}^n Z_j b_j\}$ is not Cauchy in
$\ell^\infty(\N)$ a.s. and therefore not convergent in $\ell^\infty(\N
)$. This shows that the It\^o--Nisio theorem is not valid for $\ell^\infty(\N)$.

Next we will consider $C^{0,1}([0,1])$.
A function
${f}\dvtx[0,1]\to{\R}$
with $f(0)=0$ is in $C^{0,1}([0,1])$
if and only if it is absolutely continuous with a derivative $f'$ in
$L^\infty([0,1])=L^\infty([0,1],ds)$, and in this case we have
%
\begin{equation}
\label{norm-c1} \norm{f}_{C^{0,1}}=\bigl\Vert f'
\bigr\Vert_{L^\infty}.
\end{equation}
Let $Y=\{Y(n)\}_{n\in\N}$ and $b_j$, for $j\in\N$, be defined as
above and choose a Borel measurable partition $\{A_j\}_{j\in\N}$ of
$[0,1]$ generating $\mathscr B([0,1])$. For all $j,n\in\N$ and $t\in
A_n$ let $h_j(t)=b_j(n)$ and $U(t)=Y(n)$. Then $h_j\in L^\infty([0,1])$
for all $j\in\N$ and $U\in L^\infty([0,1])$ a.s. For all $n\in\N$,
let $y_n^*$ denote the continuous linear functional on $L^1([0,1])$
given by $f\mapsto\int_{A_n} f(s) \,ds$. Since $\{y_n^*\}$ separates
points on $L^1([0,1])$ and
\[
\label{} y_n^*(U)= Y(n)\int_{A_n} 1 \,ds=\sum
_{j=1}^\infty y_n^*(h_j)
Z_j \qquad\mbox{a.s.},
\]
it follows by the It\^o--Nisio theorem
that the series $\sum_{j=1}^\infty h_j Z_j$ converges a.s. in the
separable Banach space $L^1([0,1])$ to $U$,
and hence for all $t\in[0,1]$,
\[
\label{} V(t):=\int_0^t U(s) \,ds=\sum
_{j=1}^\infty Z_j \int_0^t
h_j(s) \,ds \qquad\mbox{a.s.}
\]
Process $V=\{V(t)\}_{t\in[0,1]}\in C^{0,1}([0,1])$ a.s., and for all
$1\leq r\leq v$ we have by~\eqref{norm-c1}
\[
\label{Cauchy-ex} \Biggl\Vert \sum_{j=r}^v
\biggl(\int_0^\cdot h_j(s) \,ds \biggr)
Z_j \Biggr\Vert_{C^{0,1}}=\Biggl\Vert \sum_{j=r}^v
h_j Z_j \Biggr\Vert_{L^\infty}=\Biggl\Vert \sum
_{j=r}^v b_j Z_j
\Biggr\Vert_{\ell^\infty}.
\]
This shows that the It\^o--Nisio theorem is not valid for $C^{0,1}([0,1])$.
\end{remark}

\begin{remark}\label{convextight}
Here we will indicate why the usual arguments in the proof of the It\^
o--Nisio theorem do not work for $D[0,1]$ equipped with Skorohod's
$J_1$-topology. Such\vadjust{\goodbreak} arguments rely on the fact that all probability
measures $\mu$ on a separable Banach space $F$ are convex tight, that
is, for all $\varepsilon>0$ there exists a convex compact set $K\subseteq
F$ such that $\mu(K^c)<\varepsilon$; see, for example, \cite{KwapienRS},
Theorem~2.1.1. This is not the case in $D[0,1]$. We will show that if
$X$ is a continuous in probability process with paths in $D[0,1]$
having convex tight distribution, then $X$ must have continuous sample
paths a.s. Indeed,
let $K$ be a convex compact subset of $D[0,1]$ relative to Skorohod's
$J_1$-topology. According to Daffer and Taylor \cite
{Taylor-LLN-D-space}, Theorem~6, for
every $\varepsilon>0$ there exist $n\in\N$ and $t_1,\ldots,t_n\in[0,1]$
such that for all $x\in K$ and $t\in[0,1]\setminus\{t_1,\ldots,t_n\}$
we have $\abs{\Delta x(t)}\leq\varepsilon$. In particular,
%
\begin{equation}
\label{conv-tight} \P(X\in K)\leq\P \Bigl(\sup_{t\in[0,1]\setminus\{t_1,\ldots,t_n\}
}\bigl\vert\Delta X(t)
\bigr\vert \leq\varepsilon \Bigr)=\P \Bigl(\sup_{t\in[0,1]}\bigl\vert\Delta X(t)\bigr\vert \leq
\varepsilon \Bigr),\hspace*{-35pt}
\end{equation}
where the last equality uses that $X$ is continuous in probability.
Letting $\varepsilon\to0$ on the right-hand side of \eqref{conv-tight}
and taking $K$ such that the left-hand side is close to~1, we prove
that $\P(\sup_{t\in[0,1]} \abs{\Delta X(t)} =0)=1$. Therefore, the
only convex tight random elements in $D[0,1]$, which are continuous in
probability, are sample continuous. In particular, a L\'evy process
with a nontrivial jump part is not convex tight.
\end{remark}

%

\section{Series representations of infinitely divisible processes}
\label{series-rep}

In this section we study infinitely divisible processes with values in
a separable Banach space $E$. Recall that an infinitely divisible
probability measure $\mu$ on $E$, without Gaussian component, admits a
L\'evy--Khintchine representation of the form
%
\begin{eqnarray}
\label{eqLK} \qquad\hat\mu\bigl(x^*\bigr) = \exp \biggl\{i\bigl\langle{x^*,b}\bigr
\rangle + \int_{E} \bigl(e^{i\langle{x^*,
x}\rangle} - 1 - i
\bigl\langle{x^*, [\![ x]\!]}\bigr\rangle \bigr) \nu(dx) \biggr\},
\nonumber
\\[-8pt]
\\[-8pt]
\eqntext{x^* \in
E^*,}
\end{eqnarray}
where $b\in E$, $\nu$ is a $\sigma$-finite measure on $E$ with $\nu
(\{
0\})=0$, and $[\![ x]\!]= x/(1\vee\|x\|)$ is a continuous truncation
function. Vector $b$ will be called the shift and $\nu$ the L\'evy
measure of $\mu$. Here $E^*$ denotes the dual of $E$ and $\inner{x^*,
x} := x^*(x)$, $x^*\in E^*$ and $x\in E$. We refer the reader to \cite
{A-G-book} for more information on infinitely divisible distributions
on Banach spaces.

Let $T$ be an arbitrary set. An $E$-valued stochastic process $X=\{
X(t)\}_{t\in T}$ is called infinitely divisible if for any $t_1,\ldots
,t_n \in T$ the random vector $(X(t_1),\break \ldots,  X(t_n))$ has infinitely
divisible distribution in $E^n$. We can write its characteristic
function in the form
%
\begin{eqnarray}
\label{eqLK1}  && \E\exp \Biggl\{i \sum_{j=1}^n
\bigl\langle{x^*_j,X(t_j)}\bigr\rangle \Biggr\}
\nonumber\hspace*{-15pt}\\
&&\qquad = \exp
\Biggl\{ i \sum_{j=1}^n \bigl
\langle{x^*_j,b(t_j)} \bigr\rangle\hspace*{-15pt}
 \\
 &&\hspace*{45pt}{}+ \int_{E^n} \Biggl( e^{i \sum_{j=1}^n \langle{x^*_j,x_j}\rangle} -1 - i \sum
_{j=1}^n \bigl\langle{x^*_j, [\![
x_j]\!]}\bigr\rangle \Biggr) \nu_{t_1,\ldots,t_n}(dx_1
\cdots dx_n) \Biggr\},\nonumber\hspace*{-15pt}
\end{eqnarray}
where $ \{x_j^*\}\subseteq E^*$, $\{b(t_j)\} \subseteq E$ and $\nu_{t_1,\ldots,t_n}$ are L\'evy measures on $E^n$. Below we will work with
$T=[0,1]$; extensions to $T=[0,a]$ or $T=[0,\infty)$ are obvious.

In this section $\{V_j\}$ will stand for an i.i.d. sequence of random
elements in a measurable space $\mathcal{V}$ with the common
distribution $\eta$.
$\{\Gamma_j\}$ will denote a sequence of partial sums of standard
exponential random variables independent of the sequence $\{V_j\}$. Put $V=V_1$.

\begin{theorem}\label{thID1}
Let $X=\{X(t)\}_{t\in[0,1]}$ be an infinitely divisible process
without Gaussian part specified by \eqref{eqLK1} and with trajectories
in $\de$. Let
${H}\dvtx [0,1]\times\R_{+}\times\mathcal{V} \to{E}$
be a
measurable function such that for every $t_1,\ldots,t_n \in[0,1]$ and
$B \in\mathcal{B}(E^n)$
%
\begin{equation}
\label{eqnu} \int_0^{\infty} \P \bigl(
\bigl(H(t_1, r, V),\ldots,H(t_n, r, V)\bigr) \in B
\setminus\{0\} \bigr) \,dr = \nu_{t_1,\ldots,t_n}(B),
\end{equation}
$H(\cdot, r, v)\in\de$ for every $(r, v) \in\R_+\times\mathcal{V}$,
and $r \mapsto\|H(\cdot, r, v)\|$ is nonincreasing for every $v \in
\mathcal{V}$.
Define for $u>0$,
\[
\label{eqYu} Y^u(t) = b(t) + \sum_{j: \Gamma_j \le u}
H(t, \Gamma_j, V_j) - A^u(t),
\]
where
\[
\label{eqA} A^{u}(t) = \int_0^{u}
\E\bigl[\!\bigl[ H(t,r,V)\bigr]\!\bigr] \,dr .
\]
Then, with probability~1 as $u \to\infty$,
%
\begin{equation}
\label{conv} Y^u(t) \to Y(t)
\end{equation}
uniformly in $t \in[0,1]$, where the process $Y=\{Y(t)\}_{t\in[0,1]}$
has the same finite dimensional distributions as $X$ and paths in $\de$.

Moreover, if the probability space on which the process $X$ is defined
is rich enough, so that there exists a standard uniform random variable
independent of~$X$, then the sequences $\{\Gamma_j, V_j\}$ can be
defined on the same probability space as $X$, such that with
probability~1, $X$ and $Y$ have identical sample paths.
\end{theorem}

The proof of Theorem~\ref{thID1} will be preceded by corollaries,
remarks and a crucial lemma.

%

\begin{corollary}\label{corseries}
Under assumptions and notation of Theorem \ref{thID1}, with probability~1
%
\begin{equation}
\label{eqY1} Y(t) = b(t) + \sum_{j=1}^{\infty}
\bigl[ H(t, \Gamma_j, V_j) - C_j(t) \bigr]\qquad
\mbox{for all } t \in[0,1],
\end{equation}
where the series converges a.s. uniformly on $[0,1]$ and $C_j(t)=
A^{\Gamma_j}(t)-A^{\Gamma_{j-1}}(t)$.

Moreover, if $b$ and $A^{u}$, for sufficiently large $u$, are
continuous functions of $t \in[0,1]$, then with probability~1
%
\begin{equation}
\label{eqjumpY1} \Delta Y(t) = \sum_{j=1}^\infty
\Delta H(t,\Gamma_j, V_j)\qquad \mbox{for all } t \in[0,1],
\end{equation}
where the series converges a.s. uniformly on $[0,1]$. [$\Delta
f(t)=f(t)-f(t-)$ denotes the jump of a function $f \in\de$.]
\end{corollary}
\begin{pf}
Since the convergence in \eqref{conv} holds for a continuous index
$u$, we may take $u=\Gamma_n$, which gives
\[
\label{} Y(t) = \lim_{n \to\infty} Y^{\Gamma_n}(t) = \lim_{n \to\infty}
\Biggl( b(t) + \sum_{j=1}^n H(t,
\Gamma_j, V_j) - A^{\Gamma_n}(t) \Biggr) \qquad\mbox{a.s. in
} \|\cdot\|,
\]
proving \eqref{eqY1}. This argument and our assumptions imply \eqref
{eqjumpY1} as well.
\end{pf}
%

\begin{corollary}\label{corsym}
Suppose that the process $X$ in Theorem \ref{thID1} is symmetric, and
$H$ satisfies stated conditions except that \eqref{eqnu} holds for some
measures $\nu^0_{t_1,\ldots,t_n}$ in place of $\nu_{t_1,\ldots,t_n}$
such that
\[
\label{} \nu_{t_1,\ldots,t_n}(B) = \tfrac{1}{2} \nu^0_{t_1,\ldots,t_n}(B)
+ \tfrac
{1}{2} \nu^0_{t_1,\ldots,t_n}(-B)
\]
for every $B \in\mathcal{B}(E^n)$. Let $\{\varepsilon_j\}$ be i.i.d. symmetric Bernoulli variables independent of $\{\Gamma_j, V_j\}$. Then,
with probability~1, the series
%
\begin{equation}
\label{eqYsym} Y(t) = \sum_{j=1}^{\infty}
\varepsilon_j H(t, \Gamma_j, V_j)
\end{equation}
converges uniformly in $t \in[0,1]$. The process $Y=\{Y(t)\}_{t\in
[0,1]}$ has the same finite dimensional distributions as process $X$
and paths in $\de$.
\end{corollary}
\begin{pf}
Apply Theorem \ref{thID1} for $\tilde{H}\dvtx [0,1]\times\R_+ \times
\tilde{\mathcal{V}} \mapsto E$ defined by
\[
\label{} \tilde{H}(t, r, \tilde{v}) = s H(t, r, v),
\]
where $\tilde{v} = (s,v) \in\tilde{\mathcal{V}}:= \{-1,1\} \times
\tilde{V}$, and $\tilde{V}_j = (\varepsilon_j, V_j)$ in the place of $H$
and~$V_j$.

An alternative way to establish the uniform convergence in \eqref
{eqYsym} is to use Theorem \ref{thm1}(i) conditionally on the sequence
$\{\Gamma_j, V_j\}$.
\end{pf}

\begin{remark}\label{rem1}
There are several ways to find $H$ and $V$ for a given process such
that \eqref{eqnu} is satisfied; see Rosi{\'n}ski \cite
{RosinskiOnSeries} and \cite
{Rosinskiseriespoint}. They lead to different series representations of
infinitely divisible processes. One of such representations will be
given in the next section.
\end{remark}

\begin{lemma}\label{lemnec}
In the setting of Theorem~\ref{thID1}, the assumption that $X$ has
paths in $D([0,1];E)$ implies that $b \in\de$,
%
\begin{equation}
\label{eqint} \int_0^{\infty} \P\bigl(\bigl \|H(\cdot,
r, V)\bigr\|>1 \bigr) \,dr < \infty
\end{equation}
and
%
\begin{equation}
\label{eqG0} \lim_{j \to\infty}\bigl \|H(\cdot, \Gamma_j,
V_j)\bigr\| = 0 \qquad\mbox{a.s.}
\end{equation}
\end{lemma}
\begin{pf}
By the uniqueness, $b=b(\mu)$ in \eqref{eqLK} and by \cite
{RosinskiDissertationes}, Lemma 2.1.1, $\mu_n \cw\mu$ implies $b(\mu_n) \to b(\mu)$ in $E$.
Since $X$ has paths in $D([0,1];E)$, the function $t \mapsto\mathcal
{L}(X(t))$ is \ca, so that $b= b(\mathcal{L}(X(t))) \in\de$.

To prove \eqref{eqint} consider $\tilde{X}(t)= X(t)-X'(t)$, where $X'$
is an independent copy of $X$.
Let $\{\varepsilon_j\}$ be i.i.d.  symmetric Bernoulli variables
independent of $\{(\Gamma_j, V_j)\}$.
Using \cite{RosinskiOnSeries}, Theorem~2.4 and \eqref{eqnu}, we can
easily verify that the series
\[
\label{eqtildeY} \sum_{j=1}^{\infty}
\varepsilon_j H\bigl(t, 2^{-1}\Gamma_j,
V_j\bigr)
\]
converges a.s. for each $t \in[0,1]$ to a process $\tilde{Y}=\{
\tilde
{Y}(t)\}_{t\in[0,1]}$ which has the same finite dimensional
distributions as $\tilde{X}$. Thus we can and do assume that $\tilde
{Y}$ has trajectories in $\de$ a.s. Applying Lemma \ref{lemma1}
conditionally, for a fixed realization of $\{(\Gamma_j, V_j)\}$, we
obtain that
%
\begin{equation}
\label{eqG1} \lim_{j \to\infty} \bigl\|H\bigl(\cdot, 2^{-1}
\Gamma_j, V_j\bigr)\bigr\| = 0 \qquad\mbox{a.s.}
\end{equation}
Observe that for each $\theta\in(2^{-1},1)$,  $\Gamma_j < 2\theta j$
eventually a.s. Thus, by \eqref{eqG1} and the monotonicity of $H$,
\[
\label{} \lim_{j \to\infty}\bigl \|H(\cdot, \theta j, V_j)\bigr\| = 0\qquad
\mbox{a.s.}
\]
By the Borel--Cantelli lemma,
%
\begin{equation}
\label{eqBC} \sum_{j=1}^{\infty} \P\bigl( \bigl\|H(
\cdot, \theta j, V_j)\bigr\| > 1\bigr) < \infty.
\end{equation}
Hence
\begin{eqnarray*}
& &\sum_{j=1}^{\infty} \P\bigl(\bigl \|H(\cdot,
\Gamma_j, V_j)\bigr\| > 1\bigr)
\\
& &\qquad\le\sum_{j=1}^{\infty} \P\bigl( \bigl\|H(\cdot,
\Gamma_j, V_j)\bigr\| > 1, \Gamma_j > \theta j
\bigr) + \sum_{j=1}^{\infty} \P(
\Gamma_j \le\theta j)
\\
&&\qquad \le\sum_{j=1}^{\infty} \P\bigl(\bigl \|H(\cdot,
\theta j, V_j)\bigr\| > 1\bigr) +(1-\theta )^{-1} + \sum
_{j \ge(1-\theta)^{-1}} \frac{(\theta j)^{j}}{(j-1)!} e^{-\theta j} < \infty,
\end{eqnarray*}
where the last inequality follows from \eqref{eqBC} and the following
bound for $j \ge(1-\theta)^{-1}$
\[
\label{} \P( \Gamma_j \le\theta j) = \int_0^{\theta j}
\frac{x^{j-1}}{(j-1)!} e^{-x} \,dx \le\frac{(\theta j)^{j}}{(j-1)!} e^{-\theta j},
\]
which holds because the function under the integral is increasing on
the interval of integration.
Now we observe that
\begin{eqnarray*}
\sum_{j=1}^{\infty} \P\bigl( \bigl\|H(\cdot,
\Gamma_j, V_j)\bigr\| > 1\bigr) &=&\sum
_{j=1}^{\infty} \int_0^{\infty}
\P\bigl( \bigl\|H(\cdot, r, V_j)\bigr\| > 1\bigr) \frac
{r^{j-1}}{(j-1)!}
e^{-r} \,dr
\\
&=& \int_0^{\infty} \P\bigl(\bigl \|H(\cdot, r,
V_j)\bigr\| > 1\bigr) \sum_{j=1}^{\infty}
\frac{r^{j-1}}{(j-1)!} e^{-r} \,dr
\\
&= &\int_0^{\infty} \P\bigl( \bigl\|H(\cdot, r, V)\bigr\| > 1
\bigr) \,dr,
\end{eqnarray*}
which proves \eqref{eqint}. We also notice that \eqref{eqG1} and the
monotonicity of $H$ imply~\eqref{eqG0}.
\end{pf}

\begin{pf*}{Proof of Theorem~\ref{thID1}}
Define a bounded function $H_0$ by
\[
\label{eqH0} H_0(t, r, v) = H(t, r, v) \1\bigl(\bigl\| H(\cdot, r, v) \bigr\|
\le1\bigr),
\]
and let
\[
\label{eqA0} A^{u}_0(t) = \int_0^{u}
\E\bigl\{ H_0(t,r,V)\bigr\} \,dr.
\]
Consider for $u \ge0$,
%
\begin{equation}
\label{eqY0u} Y_0^u(t) = \sum
_{j: \Gamma_j \le u} H_0(t, \Gamma_j,
V_j) - A_0^{u}(t).
\end{equation}
Let $\rho_{t_1,\ldots,t_n}$ be defined by the left-hand side of \eqref
{eqnu} with $H$ replaced by $H_0$, $0\le t_1 <\cdots< t_n \le1$.
$\rho_{t_1,\ldots,t_n}$ is a L\'evy measure on $E^n$ because $\rho_{t_1,\ldots,t_n} \le\nu_{t_1,\ldots,t_n}$, see~\cite{A-G-book},
Chapter~3.4, Exercise~4.
Referring to the proof of Theorem 2.4 in \cite{RosinskiOnSeries}, we
infer that for each $t \in[0,1]$,
\[
Y_0(t)= \lim_{u\to\infty} Y_0^u(t)
\]
exists a.s. Moreover, the finite dimensional distributions of $\{
Y_0(t)\}_{t\in[0,1]}$ are given by \eqref{eqLK1} with $b\equiv0$ and
$\nu_{t_1,\ldots,t_n}$ replaced by $\rho_{t_1,\ldots,t_n}$.

Let
\[
\label{} b_0(t) = b(t) -\int_{0}^{\infty}
\E\bigl[\!\bigl[ H(t, r, V)\bigr]\!\bigr]\1\bigl(\bigl\|H(\cdot ,r,V)\bigr\|>1\bigr) \,dr.
\]
Using Lemma \ref{lemnec} we infer that the above integral is well
defined and $b_0 \in\de$.
In view of \eqref{eqG0}, the process
\[
\label{} Z(t) = b_0(t) + \sum_{j=1}^{\infty}
\bigl\{ H(t, \Gamma_j, V_j) - H_0(t,
\Gamma_j, V_j) \bigr\}
\]
is also well defined, as the series has finitely many terms a.s., and
$Z$ has paths in $D([0,1]; E)$. Processes $Y_0$ and $Z$ are independent
because they depend on a Poisson point process $N= \sum_{j=1}^{\infty}
\delta_{(\Gamma_j, V_j)}$ restricted to disjoint sets $\{(r,v)\dvtx \|
H(\cdot, r, v)\|\le1\}$ and its complement, respectively. Finite
dimensional distributions of $Z-b_0$ are compound Poisson as $(\nu_{t_1,\ldots,t_n} - \rho_{t_1,\ldots,t_n})(E^n) < \infty$ due to
\eqref{eqint}.
We infer that
\[
\label{} Y_0 + Z \eid X,
\]
where the equality holds in the sense of finite dimensional
distributions. Thus $Y_0$ has a modification with paths in $D([0,1];
E)$ a.s.

The family $\{\mathcal{L}(Y_0(t))\}_{t\in[0,1]}$ is relatively
compact because $\mathcal{L}(Y_0(t))$ is a convolution factor of
$\mathcal{L}(X(t))$ and $\{\mathcal{L}(X(t))\}_{t\in[0,1]}$ is
relatively compact; use Theorem~4.5, Chapter~1 together with
Corollary~4.6, Chapter~3 from \cite{A-G-book}. The latter claim follows
from the fact that the function $t \mapsto\mathcal{L}(X(t))$ is \ca.
Since $\rho_t(x\dvtx |x|_E >1) =0$ for all $t \in[0,1]$, $\{|Y_0(t)|_E \dvtx t\in[0,1] \}$ is also uniformly integrable; see
\cite{Jurek-Rosinski}, Theorem 2.

It follows from \eqref{eqY0u} that the $\de$-valued process $\{Y_0^u\}_{u \ge0}$ has independent increments and $\E Y_0^u(t)=0$ for all $t$
and $u$. By Theorem~\ref{thm1}(iii)
%
\begin{equation}
\label{Y0-conv}\bigl \|Y_0^u - Y_0\bigr\| \to0\qquad
\mbox{a.s.}
\end{equation}
as $u=u_n \uparrow\infty$. Since for each $t\in[0,1]$, the process
$\{
Y_0^u(t)\}_{u \ge0}$ is \ca\ \eqref{Y0-conv} holds also for the
continuous parameter $u \in\R_+$, $u \to\infty$; cf. \cite{RosinskiOnSeries}, Lemma~2.3.

Therefore, with probability 1 as $u \to\infty$,
\begin{eqnarray*}
&& \bigl\| Y^u - Y_0 - Z\bigr\| \le\bigl\|Y^u -
Y_0^u - Z\bigr\| + \bigl\|Y_0^u -
Y_0\bigr\|
\\
&&\qquad\le \biggl\|\sum_{j: \Gamma_j > u} \bigl\{H(\cdot,
\Gamma_j, V_j) - H_0(\cdot,
\Gamma_j, V_j)\bigr\} \biggr\|
\\
&&\qquad\quad{} + \biggl\|\int_{u}^{\infty} \E\bigl[\!\bigl[ H(\cdot,
r,V)\bigr]\!\bigr]\1\bigl(\bigl\|H(\cdot ,r,V)\bigr\| >1\bigr) \,dr \biggr\| +
\bigl\|Y_0^u - Y_0\bigr\|
\\
&&\qquad\le \sum_{j: \Gamma_j > u} \bigl\| H(\cdot, \Gamma_j,
V_j)\bigr\|\1\bigl(\bigl\|H(\cdot,\Gamma_j,V_j)\bigr\|>1
\bigr)
\\
&&\qquad\quad{} + \int_{u}^{\infty} \P\bigl(\bigl \|H(\cdot,r,V)\bigr\|>1
\bigr) \,dr +\bigl \|Y_0^u - Y_0\bigr\|\\
&&\qquad =
I_1(u) + I_2(u) + I_3(u) \to0.
\end{eqnarray*}
Indeed, $I_1(u)=0$ for sufficiently large $u$ by \eqref{eqG0}, $I_2(u)
\to0$ by \eqref{eqint} and $I_3(u) \to0$ by \eqref{Y0-conv}.
The proof is complete.
\end{pf*}

\section{Symmetric stable processes with \ca\ paths}\label{sec-4}

In this section we illustrate applications of results of Section \ref
{series-rep} to stable processes. Let $X=\{X(t)\}_{t\in[0,1]}$ be
right-continuous in probability symmetric $\alpha$-stable process,
$\alpha\in(0,2)$. Any such process has a stochastic integral
representation of the form
%
\begin{equation}
\label{int-X} X(t)=\int_S f(t,s) M(ds)\qquad \mbox{a.s. for each
} t \in[0,1],
\end{equation}
where $M$ is an independently scattered symmetric $\alpha$-stable
random measure defined on some measurable space $(S,\mathcal S)$ with a
finite control measure $m$, that is, for all $A\in\mathcal S$
%
\begin{equation}
\label{r-m} \E\exp\bigl\{i \theta M(A)\bigr\} = \exp\bigl\{-\vert \theta
\vert^\alpha m(A)\bigr\},
\end{equation}
and $f(t, \cdot) \in L^{\alpha}(S, m)$ for all $t \in[0,1]$; see
Rajput and Rosi{\'n}ski \cite
{Rosinskispec}, Theorem~5.2, for the almost sure representation in
\eqref{int-X}. Therefore, all symmetric $\alpha$-stable processes are
Volterra processes.
Conversely, a process given by \eqref{int-X} and~\eqref{r-m} is symmetric
$\alpha$-stable.

A trivial case of \eqref{int-X} is when $X$ is a standard symmetric L\'
evy process. In that case, $M$ is a random measure generated by the
increments of $X$, $S=[0,1]$, $m$ is the Lebesgue measure and $f(t,
s)=\1_{(0,t]}(s)$.

A process $X$ given by \eqref{int-X} has many series representations of
the form \eqref{eqY1} because there are many ways to construct a
function $H$ satisfying \eqref{eqnu}; see \cite{Rosinskiseriespoint}.
A~particularly nice representation, called the LePage representation, is
the following.
Let $\{V_j\}$ be an i.i.d. sequence of random elements in $S$ with the
common distribution $m/m(S)$. Let $\{\Gamma_j\}$ be a sequence of
partial sums of standard exponential random variables independent of
the sequence $\{V_j\}$. Let $\{\varepsilon_j\}$ be an i.i.d. sequence of
symmetric Bernoulli random variables. Assume that the random sequences
$\{V_j\}$, $\{\Gamma_j\}$ and $\{\varepsilon_j\}$ are independent. Then
for each $t \in[0,1]$,
%
\begin{equation}
\label{sas} X(t)= c_{\alpha} m(S)^{1/\alpha} \sum
_{j=1}^{\infty} \varepsilon_j
\Gamma_j^{-1/\alpha} f(t,V_j)\qquad \mbox{a.s.}
\end{equation}
(the almost sure representation is obtained by combining
\cite{Rosinskiseriespoint} and \cite{Rosinskisumrep}, Proposition~2).
Here $c_{\alpha}= [-\alpha\cos(\pi\alpha/2)\Gamma(-\alpha
)]^{-1/\alpha
}$ for $\alpha\neq1$ and $c_1=2/\pi$.

\begin{corollary}\label{cor-stable0}
Let $X=\{X(t)\}_{t\in[0,1]}$ be a symmetric $\alpha$-stable process of
the form \eqref{int-X}, where $\alpha\in(0,2)$. Assume that $X$ is
\ca\
and continuous in probability and also that $f(\cdot, s) \in D[0,1]$
for all $s$. Then with probability~1,
\[
\label{sas1} X(t)= c_{\alpha} m(S)^{1/\alpha} \sum
_{j=1}^{\infty} \varepsilon_j
\Gamma_j^{-1/\alpha} f(t,V_j)\qquad \mbox{for all } t
\in[0,1],
\]
where the series converges a.s. uniformly on $[0,1]$. Therefore, with
probability~1
%
\begin{equation}
\label{jump1} \Delta X(t)= c_{\alpha} m(S)^{1/\alpha} \sum
_{j=1}^{\infty} \varepsilon_j
\Gamma_j^{-1/\alpha} \Delta f(t,V_j),\qquad t \in[0,1],
\end{equation}
where the series has no more than one nonzero term for each $t$. That
is,
%
\begin{equation}
\label{eq-jumps} \P \bigl( \Delta f(t,V_j)\Delta f(t,V_k)
= 0 \mbox{ for all } j\ne k \mbox{ and } t\in[0,1] \bigr)=1.
\end{equation}
\end{corollary}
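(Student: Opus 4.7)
My plan is to apply Corollary~\ref{cor:sym} to the function
\begin{equation}
H(t, r, v) = c_\alpha\, m(S)^{1/\alpha}\, r^{-1/\alpha}\, f(t, v), \qquad t \in [0,1],\ r > 0,\ v \in S,
\end{equation}
with $V_j$ i.i.d.\ of law $m/m(S)$. Because $f(\cdot, v) \in D[0,1]$ for every $v$, each $H(\cdot, r, v) \in D[0,1]$; the monotonicity of $r \mapsto \norm{H(\cdot, r, v)}$ is immediate from the factor $r^{-1/\alpha}$. A routine change of variables $y = c_\alpha m(S)^{1/\alpha} r^{-1/\alpha}$ identifies the integral on the left of \eqref{eq:nu} with the standard LePage one-sided spectral L\'evy measure of the multivariate symmetric $\alpha$-stable law of $(X(t_1), \ldots, X(t_n))$, whose symmetrization equals $\nu_{t_1, \ldots, t_n}$; this is precisely the LePage-type representation from \cite{Rosinski_series_point}. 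Corollary~\ref{cor:sym} then produces a random element $Y \in D[0,1]$ to which the partial sums on the right side of \eqref{sas1} converge uniformly a.s.\ and whose finite-dimensional distributions agree with those of $X$. On the enlarged probability space carrying the pointwise a.s.\ identity \eqref{sas}, the uniform limit $Y$ must coincide with $X$ at every fixed $t$, and right-continuity of both processes upgrades this to pathwise equality, yielding \eqref{sas1}.

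The identity \eqref{jump1} is then automatic: for any $h_n, h \in D[0,1]$ with $\norm{h_n - h} \to 0$ one has $\sup_t \abs{\Delta h_n(t) - \Delta h(t)} \le 2\norm{h_n - h} \to 0$, so uniform convergence of the partial sums propagates to their jump processes.

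The delicate step, which I expect to be the main obstacle, is \eqref{eq-jumps}. I would first upgrade continuity of $X$ in probability to the pointwise statement that $m(G_t) = 0$ for \emph{every} $t \in [0,1]$, where $G_t := \{s \in S : \Delta f(t, s) \ne 0\}$. Since $X(u) - X(t)$ is symmetric $\alpha$-stable with scale $\norm{f(u, \cdot) - f(t, \cdot)}_{L^\alpha(m)}$, continuity in probability of $X$ forces $f(u, \cdot) \to f(t, \cdot)$ in $L^\alpha(m)$ as $u \to t$; picking any sequence $u_n \uparrow t$, extracting an $m$-a.e.\ convergent subsequence, and using the pointwise left-limit $f(u_n, s) \to f(t-, s)$ yields $f(t-, s) = f(t, s)$ for $m$-a.e.\ $s$, i.e., $m(G_t) = 0$. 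Setting $J_i := \{t : \Delta f(t, V_i) \ne 0\}$, which is a.s.\ countable because $f(\cdot, V_i) \in D[0,1]$, conditioning on $V_j$ for fixed $j \ne k$ gives
\begin{equation}
\P(J_j \cap J_k \ne \emptyset \mid V_j) \le \sum_{t \in J_j} \P(V_k \in G_t \mid V_j) = \sum_{t \in J_j} m(G_t)/m(S) = 0,
\end{equation}
and a union bound over the countably many pairs $j \ne k$ yields \eqref{eq-jumps}.
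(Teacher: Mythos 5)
Your proposal is correct and follows essentially the same route as the paper: the uniform convergence in \eqref{sas1} and the jump identity \eqref{jump1} come from the series-representation corollaries of Theorem~\ref{th:ID1} applied to the LePage kernel, and \eqref{eq-jumps} comes from showing $m(\{s:\Delta f(t,s)\neq 0\})=0$ for each fixed $t$ and then a Fubini/conditioning argument over the a.s.\ countable jump set of one path. The only (minor) difference is how that fixed-$t$ fact is obtained: you use stochastic continuity of $X$ plus the stable scale identity to get $L^\alpha(m)$-continuity of $t\mapsto f(t,\cdot)$ and pass to an $m$-a.e.\ convergent subsequence, whereas the paper deduces $\P(\Delta f(t,V_j)=0)=1$ directly from the series \eqref{sas} by a symmetrization (contraction) inequality; both are valid.
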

\begin{pf}
In view of Corollary \ref{corseries} we only need to show \eqref
{eq-jumps}. $f(\cdot,V_j)$ are i.i.d. \ca\ processes. Since $X$ is
continuous in probability, from \eqref{sas} by a symmetrization
inequality, we get $\P(\Delta f(t,V_j)=0)=1$ for each $t\in[0,1]$.
Thus for each $j\ne k$ and $\mu=\mathcal{L} (f(\cdot, V_k)
)$ we have
\begin{eqnarray*}
\label{} && \P \Bigl( \sup_{1\le t \le1} \bigl|\Delta f(t,V_j)\Delta
f(t,V_k)\bigr| = 0 \Bigr)
\\
&&\qquad = \int_{D[0,1]} \P \Bigl( \sup_{1\le t \le1} \bigl|\Delta
f(t,V_j)\Delta x(t)\bigr| = 0 \Bigr) \mu(dx) = 1,
\end{eqnarray*}
because $\Delta x(t) \ne0$ for at most countably many $t$. This
implies \eqref{eq-jumps}.
\end{pf}

Next we consider some functionals of the jump process $\Delta X$. Let
$V_p(g)$ be defined as
\[
\label{} V_p(g)=\sum_{t\in[0,1]}\bigl\vert\Delta
g(t)\bigr\vert^p,
\]
where $g \in D[0,1]$ and $p>0$. Recall that a random variable $Z$ is
Fr\'echet distributed with shape parameter $\alpha>0$ and scale
parameter $\sigma>0$ if for all $x>0$, $\P(Z\leq x)=e^{-(x/\sigma
)^{-\alpha}}$. The results below are well known for a L\'evy stable
process. Below we give their versions for general \ca\ symmetric stable
processes.

\begin{corollary} \label{cor-stable}
Under the assumptions of Corollary \ref{cor-stable0} we have the following:
\begin{longlist}[(iii)]
\item[(i)]
$V_p(X)<\infty$ a.s.  if and only if either
$f(\cdot,s)$ is continuous for $m$-a.a. $s$, in which case $V_p(X)=0$
a.s.  or $p>\alpha$ and $\int V_p(f(\cdot,s))^{\alpha/p} m(ds) \in
(0,\infty)$. In the latter case, $V_p(X)$ is a positive $(\alpha
/p)$-stable random variable with shift parameter~$0$ and scale parameter
\[
\label{} c_{\alpha}^pc_{\alpha/p}^{-1} \biggl(
\int V_p\bigl(f(\cdot,s)\bigr)^{\alpha/p} m(ds)
\biggr)^{p/\alpha}.
\]
\item[(ii)]
The largest jump of $X$ in absolute value, $\sup_{t\in[0,1]}\abs{\Delta X(t)}$, is Fr\'echet distributed with shape
parameter $\alpha$ and scale parameter
\[
\label{} c_\alpha \biggl(\int\sup_{t\in[0,1]}\bigl\vert\Delta f(t,s)
\bigr\vert^\alpha m(ds) \biggr)^{1/\alpha}.
\]
\item[(iii)]
The largest jump of $X$, $\sup_{t\in[0,1]}
\Delta
X(t)$, is
Fr\'echet distributed with shape parameter $\alpha$ and scale parameter
\begin{eqnarray*}
\label{def-u} &&\frac{c_\alpha}{2} \biggl[ \biggl(\int\Bigl\vert\sup_{t\in[0,1]}
\Delta f(t,s)\Bigr\vert^\alpha m(ds) \biggr)^{1/\alpha}
\\
&&\qquad{}+ \biggl(\int\Bigl\vert
\inf_{t\in[0,1]} \Delta f(t,s)\Bigr\vert^\alpha m(ds) \biggr)^{1/\alpha}
\biggr].
\end{eqnarray*}
\end{longlist}

\end{corollary}

\begin{pf}
(i): By \eqref{jump1} and \eqref{eq-jumps} we have a.s. %
\begin{eqnarray*}
\label{ty} \sum_{t\in[0,1]} \bigl\vert\Delta X(t)
\bigr\vert^p &=& c_{\alpha}^p m(S)^{p/\alpha} \sum
_{t\in[0,1]} \sum_{j=1}^\infty
\Gamma_j^{-p/\alpha}\bigl|\Delta f(t,V_j)\bigr|^p
\\
& =&c_{\alpha}^p m(S)^{p/\alpha} \sum
_{j=1}^\infty\Gamma_j^{-1/(\alpha
/p)}
V_p\bigl(f(\cdot,V_j)\bigr),
\end{eqnarray*}
which show (i); see, for example,  \cite{Stable}.

(ii): By (\ref{jump1}) and (\ref{eq-jumps}) we have a.s.
\begin{eqnarray*}
\sup_{t\in[0,1]} \bigl|\Delta X(t)\bigr| &=& c_{\alpha} m(S)^{1/\alpha}
\sup_{t\in[0,1]} \sup_{j \in\N} \Gamma_j^{-1/\alpha}\bigl|
\Delta f(t, V_j)\bigr|
\\
&=& c_{\alpha} m(S)^{1/\alpha} \sup_{j \in\N}
\Gamma_j^{-1/\alpha
} W_j,
\end{eqnarray*}
where $W_j= \sup_{t\in[0,1]} |\Delta f(t, V_j)|$ are i.i.d. random
variables. For $j\in\N$ set $\xi_j = \Gamma_j^{-1/\alpha} W_j$. Then
$\sum_{j=1}^{\infty} \delta_{\xi_j}$ is a Poisson point process on
$\R_+$ with the intensity measure $\mu(dx)= \alpha\E W_1^{\alpha}
x^{-\alpha-1} \,dx$, $x>0$. Let $\eta_j= (\E W_1^{\alpha})^{1/\alpha
}\Gamma_j^{-1/\alpha}$. Since the Poisson point processes $\sum_{j=1}^{\infty} \delta_{\xi_j}$ and $\sum_{j=1}^{\infty} \delta_{\eta
_j}$ have the same intensity measures, the distributions of their
measurable functionals are equal. That is, $\sup_{j} \xi_j \eid\sup_{j} \eta_j$, so that
\begin{eqnarray*}
\label{} \sup_{t\in[0,1]} \bigl|\Delta X(t)\bigr| & \eid& c_{\alpha}
m(S)^{1/\alpha} \sup_{j \in\N} \bigl(\E W_1^{\alpha}
\bigr)^{1/\alpha}\Gamma_j^{-1/\alpha}
\\
&=& c_{\alpha} m(S)^{1/\alpha} \bigl(\E W_1^{\alpha}
\bigr)^{1/\alpha} \Gamma_1^{-1/\alpha}.
\end{eqnarray*}
This shows (ii).

(iii): By \eqref{jump1} and \eqref{eq-jumps} we have a.s. %
\begin{eqnarray*}
\sup_{t\in[0,1]} \Delta X(t) &=& c_{\alpha} m(S)^{1/\alpha}
\sup_{t\in
[0,1]} \sup_{j \in\N} \varepsilon_j
\Gamma_j^{-1/\alpha} \Delta f(t, V_j)
\\
&=& c_{\alpha} m(S)^{1/\alpha} \sup_{j \in\N}
\Gamma_j^{-1/\alpha
} W_j,
\end{eqnarray*}
where
\[
\label{}
W_j = \cases{ \displaystyle\sup_{t\in[0,1]} \Delta f(t,
V_j), &\quad $\mbox{if } \varepsilon_j=1,$ \vspace*{2pt}
\cr
\displaystyle -
\inf_{t\in[0,1]} \Delta f(t, V_j), &\quad $\mbox{if }
\varepsilon_j=-1.$ }
\]
Observe that $W_j\geq0$ is an i.i.d. sequence. Proceeding as in (ii)
we get
\[
\label{} \sup_{t\in[0,1]} \Delta X(t) \eid c_{\alpha}
m(S)^{1/\alpha} \bigl(\E W_1^{\alpha}\bigr)^{1/\alpha}
\Gamma_1^{-1/\alpha},
\]
which completes the proof.
\end{pf}

It can be instructive to examine how Corollaries \ref
{cor-stable0} and \ref
{cor-stable} apply to the above mentioned standard symmetric stable L\'
evy process.

The crucial assumption in the above corollaries is that a stable
process has \ca\ paths. To this end we establish a sufficient criterion
which extends a recent result of Davydov and Dombry \cite
{LePage-series} obtained by
different methods; see Remark \ref{DD}.

\begin{theorem}\label{pro-ca}
Let $X=\{X(t)\}_{t\in[0,1]}$ be given by \eqref{int-X} and let
$\alpha
\in(1,2)$.
Assume that there exist $\beta_1,\beta_2>1/2$, $p_1 > \alpha$,
$p_2>\alpha/2$ and increasing continuous functions
${F_1,F_2}\dvtx[0,1]\to\R$
such that for all $0\leq t_1\leq t\leq t_2\leq1$,
%
\begin{eqnarray}
\label{B1}  &&\int\bigl\vert f(t_2,s)-f(t_1,s)
\bigr\vert^{p_1} m(ds)\leq \bigl|F_1(t_2)-F_1(t_1)\bigr|^{\beta_1},
\\
\label{B2}  &&\int\bigl\vert\bigl(f(t,s)-f(t_1,s)\bigr)
\bigl(f(t_2,s)-f(t,s)\bigr)\bigr\vert^{p_2} m(ds)
\nonumber
\\[-8pt]
\\[-8pt]
\nonumber
&&\qquad\leq
\bigl|F_2(t_2)-F_2(t_1)\bigr|^{2\beta_2}.
\end{eqnarray}
Then $X$ has a \ca\ modification.
\end{theorem}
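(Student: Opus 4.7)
The plan is to prove that $X$ admits a c\`adl\`ag modification by verifying a three-point tightness criterion of Billingsley--Kolmogorov type in $D[0,1]$: it suffices to exhibit $q > 0$, $\gamma > 1$, and a continuous nondecreasing $F:[0,1]\to\R$ such that
\[
\E\bigl[\,|X(t)-X(t_1)|^{q}\,|X(t_2)-X(t)|^{q}\,\bigr]\le C\,[F(t_2)-F(t_1)]^{\gamma}
\]
for all $0\le t_1\le t\le t_2\le 1$ (see, e.g., Theorem~13.5 in Billingsley's \emph{Convergence of Probability Measures}). The natural choice is $F:=F_1+F_2$.

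Writing $g_1:=f(t,\cdot)-f(t_1,\cdot)$, $g_2:=f(t_2,\cdot)-f(t,\cdot)$, and $Y_i:=\int g_i\,dM$, I would derive this joint moment bound by substituting the LePage expansion $Y_i = c\sum_j \epsilon_j \Gamma_j^{-1/\alpha} g_i(V_j)$ into $Y_1 Y_2$ and splitting the resulting double sum into a diagonal part (where a single Poisson atom contributes to both factors) and an off-diagonal Rademacher chaos (where distinct atoms do). The diagonal is an $(\alpha/2)$-stable integral of $g_1g_2$, so for $q<\alpha/2$ its $q$-th moment is controlled by a multiple of $\|g_1 g_2\|_{L^{\alpha/2}(m)}^{q}$; H\"older's inequality using $p_2>\alpha/2$ and (B2) then yields a bound by $C|F_2(t_2)-F_2(t_1)|^{2q\beta_2/p_2}$. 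The off-diagonal Rademacher chaos of order two, after a standard decoupling argument (cf.\ the monograph of Kwapie\'n and Woyczy\'nski), is bounded by a multiple of $\|g_1\|_{L^\alpha(m)}^{q}\|g_2\|_{L^\alpha(m)}^{q}$, which via H\"older using $p_1>\alpha$, (B1), and the AM--GM inequality $|F_1(t)-F_1(t_1)|\cdot|F_1(t_2)-F_1(t)|\le [F_1(t_2)-F_1(t_1)]^{2}/4$ is dominated by $C|F_1(t_2)-F_1(t_1)|^{2q\beta_1/p_1}$. Adding the two estimates and choosing $q$ below the joint integrability threshold but large enough that both exponents $2q\beta_i/p_i$ exceed $1$ yields the Billingsley condition with $\gamma>1$ and $F=F_1+F_2$.

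The main obstacle is the joint moment estimate itself. A direct Cauchy--Schwarz bound yields only the off-diagonal piece and, combined with (B1) alone, does not reach an exponent $>1$ under the minimal hypotheses $\beta_1>1/2$, $p_1>\alpha$; the key point is that (B2) is needed precisely to control the ``diagonal'' alignment of $(Y_1,Y_2)$ via $\|g_1g_2\|_{L^{\alpha/2}(m)}$, independently of their individual sizes. Carrying out the Rademacher chaos decoupling cleanly, and verifying that the stated thresholds $\beta_i>1/2$, $p_1>\alpha$, $p_2>\alpha/2$ are exactly what is needed for a valid choice of $q$, is the technical heart of the argument. Once the c\`adl\`ag modification is established, Corollary~\ref{cor-stable0} delivers, as a byproduct, the uniform almost sure convergence of the LePage series \eqref{sas1}, closing the loop with the preceding results of Section~\ref{sec-4}.
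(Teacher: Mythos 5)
There is a genuine gap, and it is precisely at the point you flag as ``the technical heart'': under the stated hypotheses no admissible $q$ exists in general, because the untruncated stable integrals do not possess moments of the orders to which \eqref{B1}--\eqref{B2} are calibrated. In your scheme the diagonal part of $Y_1Y_2$ is an $(\alpha/2)$-stable integral of $g_1g_2$, so you may only take $q<\alpha/2$, and H\"older then converts \eqref{B2} into the exponent $2q\beta_2/p_2$; the off-diagonal chaos (even after decoupling, being a product of two $\alpha$-stable factors) only has moments of order $q<\alpha$, giving the exponent $2q\beta_1/p_1$ from \eqref{B1}. To get both exponents above $1$ you would need $q>p_2/(2\beta_2)$ and $q>p_1/(2\beta_1)$, i.e.\ $p_2<\alpha\beta_2$ and $p_1<2\alpha\beta_1$. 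The theorem only assumes $p_2>\alpha/2$, $p_1>\alpha$, $\beta_i>1/2$, so $p_1,p_2$ may be arbitrarily large, and even in the borderline regime ($p_1$ slightly above $\alpha$, $p_2$ slightly above $\alpha/2$, $\beta_i$ slightly above $1/2$) the required inequalities can fail. This is not something a sharper decoupling argument can repair: the joint moments of order $p_1$ (resp.\ $2p_2$) that would match the hypotheses simply are not finite for $X$ itself, and the degraded exponents obtained from the moments that do exist need not exceed $1$.

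The paper's proof removes this obstruction by first decomposing the stable random measure as $M=N+N'$, where $N$ carries only the jumps of size at most $1$ (see \eqref{def-N}) and $N'$ is compound Poisson with finitely many atoms. The large-jump part is handled pathwise: \eqref{B1}--\eqref{B2}, read as moment conditions on $f=\{f(t,\cdot)\}$ viewed as a process on the probability space $(S,m/m(S))$, give (Billingsley, Theorem 13.6) a version with $t\mapsto f(t,s)$ \ca, so $\int f(t,s)\,N'(ds)$ is a finite random combination of \ca\ functions. For the truncated part $Y(t)=\int f(t,s)\,N(ds)$ all moments are finite, and Lemma~\ref{est-Z} --- proved by exactly the diagonal/off-diagonal splitting and decoupling you envisage, but applied to $N$ and using the Rosi\'nski--Turner $L^p$ estimates --- yields the tail bound $\P(|Z_1Z_2|>\lambda)\le C\big(\lambda^{-p_1}\int|h_1|^{p_1}dm\int|h_2|^{p_1}dm+\lambda^{-p_2}\int|h_1h_2|^{p_2}dm\big)$, i.e.\ with the \emph{same} exponents $p_1,p_2$ as in the hypotheses; note also that the paper uses the Chentsov-type criterion in tail-probability form, which allows mixing the two different orders $p_1$ and $p_2$, something a single joint-moment condition cannot do. This gives exponents $2\beta_1\wedge2\beta_2>1$ and hence the \ca\ modification (together with $L^{p_1}$-continuity from the last bound in Lemma~\ref{est-Z}). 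So your decomposition of $Y_1Y_2$ is the right idea, but it must be carried out after truncating the big jumps; without that step the argument fails.
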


\begin{pf}
Decompose $M$ as $M=N+N'$, where $N$ and $N'$ are independent,
independently scattered random measures given by
%
\begin{equation}
\label{def-N} \E\exp\bigl\{i \theta N(A)\bigr\} = \exp \biggl\{ k_\alpha
m(A) \int_{0}^1 \bigl( \cos(\theta x)-1 \bigr)
x^{-1-\alpha} \,dx \biggr\}
\end{equation}
and
\[
\E\exp\bigl\{i \theta N'(A)\bigr\} = \exp \biggl\{
k_\alpha m(A) \int_{1}^{\infty} \bigl( \cos(
\theta x)-1 \bigr) x^{-1-\alpha} \,dx \biggr\},
\]
where $A\in\mathcal S$ and $k_{\alpha}= \alpha c_{\alpha}^{\alpha}$.
Treating $f=\{f(t,\cdot)\}_{t\in[0,1]}$ as a stochastic process
defined on $(S, m/m(S))$, observe that by \cite{Billingsley},
Theorem~13.6, \eqref{B1}--\eqref{B2} imply that $f$ has a modification
with paths in $D[0,1]$. Therefore, without affecting \eqref{int-X}, we
may choose $f$ such that $t\mapsto f(t,s)$ is \ca\ for all $s$. Since
$N'$ has finite support a.s. [$N'(S)$ has a compound Poisson
distribution], it suffices to show that a process $Y=\{Y(t)\}_{t\in
[0,1]}$ given by
\[
\label{} Y(t)=\int_S f(t,s) N(ds),
\]
has a \ca\ modification.
To this end, invoking again \cite{Billingsley}, Theorem~13.6, it is
enough to show that $Y$ is right-continuous in probability and there
exist a continuous increasing function
${F}\dvtx[0,1]\to\R$, $\beta
>\frac{1}{2}$ and $p>0$ such that for all $0\leq t_1\leq t\leq t_2\leq
1$ and $\lambda\in(0,1)$
%
\begin{equation}
\label{enough} \P \bigl(\bigl\vert Y(t)-Y(t_1)\bigr\vert\wedge
\bigl|Y(t_2)-Y(t)\bigr|>\lambda \bigr)\leq \lambda^{-p}
\bigl[F(t_2)-F(t_1)\bigr]^{2\beta}.
\end{equation}
(Notice that \cite{Billingsley}, Theorem~13.6 assumes that \eqref
{enough} holds for all $\lambda>0$, but the proof reveals that
$\lambda
\in(0,1)$ suffices.)

Set
\[
\label{} Z_1=Y(t)-Y(t_1) = \int_{S}
h_1 \,dN \quad\mbox{and}\quad Z_2=Y(t_2)-Y(t) = \int
_{S} h_2 \,dN,
\]
where $h_1(s)= f(t,s) -f(t_1,s)$ and $h_2(s)= f(t_2,s) -f(t,s)$. Below
$C$ will stand for a constant that is independent of $\lambda, t_1, t,
t_2$ but may be different from line to line. Applying \eqref
{est-lemma-2} of Lemma~\ref{est-Z} and assumptions \eqref{B1}--\eqref
{B2} we get
\begin{eqnarray*}
&& \P \bigl( \bigl\vert Y(t)-Y(t_1)\bigr\vert \wedge\bigl\vert Y(t_2)-Y(t)
\bigr\vert>\lambda \bigr)
\\
& &\qquad= \P\bigl(\vert Z_1\vert\wedge\vert Z_2\vert>\lambda\bigr) \le\P
\bigl(\vert Z_1 Z_2\vert>\lambda^2\bigr)
\\
& &\qquad\le C \biggl(\lambda^{-2p_1} \int\vert h_1
\vert^{p_1} \,dm \int\vert h_2\vert^{p_1} \,dm +
\lambda^{-2p_2}\int \vert h_1 h_2
\vert^{p_2} \,dm \biggr)
\\
&&\qquad \le C \bigl(\lambda^{-2p_1} \bigl|F_1(t_2)-F_1(t_1)\bigr|^{2\beta_1}
+ \lambda^{-2p_2}\bigl|F_2(t_2)-F_2(t_1)\bigr|^{2\beta_2}
\bigr).
\end{eqnarray*}
Thus \eqref{enough} holds for $\lambda\in(0,1)$ with $p=2(p_1\vee
p_2)$, $\beta=\beta_1\wedge\beta_2$ and $F= C (F_1+F_2)$. The last
bound in Lemma \ref{est-Z} together with \eqref{B1} imply continuity of
$Y$ in $L^{p_1}$. The proof will be complete after proving the
following lemma.
\end{pf}

\begin{lemma}\label{est-Z}
Let $N$ be given by \eqref{def-N} and let $Z_k=\int_{S} h_k\, dN$, where
$h_k$ is a deterministic function integrable with respect to $N$, $k=1,2$.
For all $p_1> \alpha$ and $p_2>\alpha/2$ there exists a constant $C>0$,
depending only on $p_1,p_2$ and $\alpha$, such that for all $\lambda>0$
%
\begin{eqnarray}
\label{est-lemma-2}\quad  &&\P\bigl(\vert Z_1Z_2\vert>\lambda\bigr)
\nonumber
\\[-8pt]
\\[-8pt]
\nonumber
&&\qquad\leq C
\biggl(\lambda^{-p_1} \int\vert h_1\vert^{p_1} \,dm
\int\vert h_2\vert^{p_1} \,dm + \lambda^{-p_2}\int
\vert h_1 h_2\vert^{p_2} \,dm \biggr).
\end{eqnarray}
Moreover, $E |Z_1|^{p_1} \le C \int|h_1|^{p_1} \,dm$.
\end{lemma}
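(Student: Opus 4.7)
The plan is to first establish the $L^{p_1}$ moment bound and then split $Z_1 Z_2$ via the product formula for compensated Poisson integrals into a second-chaos ``off-diagonal'' piece plus a non-negative ``diagonal'' integral, bounding each by Markov's inequality.

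For the moment bound, I realize $N$ as a symmetric Poisson random measure $\bar N$ on $S\times ((-1,1)\setminus\{0\})$ with intensity $\mu(ds,dx)=\tfrac{1}{2}k_\alpha\, m(ds)\otimes |x|^{-1-\alpha}\1_{\{0<|x|<1\}}\,dx$, so that $Z_1=\int h_1(s)\,x\,d\bar N(s,x)$ is symmetric infinitely divisible with Lévy measure $\rho_1$ equal to the push-forward of $\mu$ under $(s,x)\mapsto h_1(s)x$. A direct change of variables gives
\[
\int |y|^{p_1}\,\rho_1(dy) \;=\; c_{p_1,\alpha}\int_S |h_1|^{p_1}\,dm,
\]
finite precisely because $p_1>\alpha$. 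For $p_1\in(\alpha,2]$ the von Bahr--Esseen inequality applied to symmetric id variables yields $E|Z_1|^{p_1}\le C\int|h_1|^{p_1}dm$; for $p_1>2$ Rosenthal's inequality gives $E|Z_1|^{p_1}\le C\big(\int |h_1|^{p_1}\,dm+(\int h_1^2\,dm)^{p_1/2}\big)$, and the second term is absorbed into the first via Hölder on the finite measure $m$.

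For the tail bound, let $f(s,x)=h_1(s)x$ and $g(s,x)=h_2(s)x$. The product formula for Poisson--Wiener integrals yields
\[
Z_1 Z_2 \;=\; I_1(f)\,I_1(g) \;=\; I_2(f\otimes g)+I_1(fg)+\langle f,g\rangle_{L^2(\mu)} \;=\; I_2(f\otimes g)+\int h_1(s)\,h_2(s)\,x^2\,d\bar N(s,x),
\]
where the last integral is uncompensated. Consequently
\[
\P\bigl(|Z_1 Z_2|>\lambda\bigr) \;\le\; \P\bigl(|I_2(f\otimes g)|>\tfrac{\lambda}{2}\bigr) \;+\; \P\Bigl(\int |h_1 h_2|\,x^2\,d\bar N>\tfrac{\lambda}{2}\Bigr).
\]
For the first summand I invoke Kwapie\'n--de la Pe\~na--Gin\'e decoupling for second-order Poisson chaoses: $\|I_2(f\otimes g)\|_{p_1}\le C_{p_1}\|Z_1\,Z_2'\|_{p_1}$, where $Z_2'$ is an independent copy of $Z_2$ built from an independent Poisson measure. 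Independence and the moment bound of Step~1 give $E|Z_1 Z_2'|^{p_1}=E|Z_1|^{p_1}E|Z_2|^{p_1}\le C\int|h_1|^{p_1}dm\int|h_2|^{p_1}dm$, and Markov delivers the first term of the stated bound.

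For the diagonal contribution $X=\int |h_1 h_2|\,x^2\,d\bar N$, a non-negative Poisson integral of $\phi(s,x)=|h_1 h_2|(s)\,x^2$, I split cases. When $p_2\in(\alpha/2,1]$, subadditivity of $t\mapsto t^{p_2}$ together with Campbell's formula gives
\[
E X^{p_2} \;\le\; E\sum_i \phi(\xi_i)^{p_2} \;=\; \int \phi^{p_2}\,d\mu \;=\; c'_{p_2,\alpha}\int |h_1 h_2|^{p_2}\,dm,
\]
finite because $2p_2>\alpha$. For $p_2>1$ a Rosenthal-type bound for positive Poisson integrals controls $EX^{p_2}$ by $C\bigl(\int \phi^{p_2}\,d\mu+(\int\phi\,d\mu)^{p_2}\bigr)$, with the second summand again absorbed into the first via Hölder on the finite measure $m$. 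Markov then yields $\P(X>\lambda/2)\le C\lambda^{-p_2}\int|h_1 h_2|^{p_2}\,dm$, completing the proof. The main obstacle is justifying the decoupling step for the second chaos; an alternative route is to thin $\bar N=\bar N_1+\bar N_2$ into two independent halves and argue conditionally, reducing $I_2(f\otimes g)$ to a genuinely decoupled bilinear form at the cost of absolute constants.
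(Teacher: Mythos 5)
Your argument is correct in substance and follows the same skeleton as the paper's proof -- split $Z_1Z_2$ into an off-diagonal (bilinear) part plus a nonnegative diagonal part, bound the off-diagonal part by decoupling and a product of single-integral moments, bound the diagonal part by a direct moment estimate, and finish with Markov -- but the technical implementation is genuinely different. The paper first reduces to simple functions $h_1=\sum_j a_j\1_{A_j}$, $h_2=\sum_j b_j\1_{A_j}$, so the off-diagonal term $T=\sum_{j\neq k}a_jb_kN(A_j)N(A_k)$ is a finite bilinear form in the independent variables $N(A_j)$, to which the classical decoupling inequality \cite[Theorem~6.3.1]{Kwapien_R_S} applies directly, while the diagonal $D=\sum_k a_kb_kN(A_k)^2$ is handled through the $\ell_2^n$-valued vector $\mathbf X=(|a_jb_j|^{1/2}N(A_j))_j$; both steps rest on the Rosi\'nski--Turner implicit-parameter $L^p$-estimates \cite{Rosinski-Turner} for infinitely divisible vectors, which is the paper's key external input (and which, like your H\"older/Jensen absorption for $p>2$, makes the constant depend also on $m(S)$ -- so that point is on equal footing with the paper). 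You instead work at the level of the underlying Poisson point process: your diagonal is the sum over atoms $\int|h_1h_2|x^2\,d\bar N$ (finer than the paper's cell-diagonal $D$), controlled by subadditivity/Campbell for $p_2\le1$ and a positive Rosenthal bound for $p_2>1$, and your single-integral moments come from symmetry plus von Bahr--Esseen (or the elementary Rademacher comparison) for $p\le 2$ and Kunita/Rosenthal for $p>2$; this buys you explicit classical inequalities in place of the Rosi\'nski--Turner machinery.

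Two points need tightening. First, the product formula and the identification $I_1(fg)+\langle f,g\rangle_{L^2(\mu)}=\int h_1h_2x^2\,d\bar N$ require integrability (e.g.\ $h_1h_2\in L^1(m)$) that is not implied by finiteness of $\int|h_1h_2|^{p_2}dm$ when $p_2<1$; the clean fix is exactly the paper's opening move -- prove the bound for simple (or truncated bounded) $h_1,h_2$ and pass to the limit by Fatou. Second, the decoupling step you flag as the main obstacle is genuinely the delicate point: the statement $\|I_2(f\otimes g)\|_{p_1}\le C\|Z_1Z_2'\|_{p_1}$ is true but needs a decoupling theorem for second-order Poisson chaos (or for double integrals of processes with independent increments), not just the scalar quadratic-form version; your thinning sketch does not by itself decouple, since the pairs falling in the same half reproduce the original chaos and one still needs the full U-statistics decoupling argument. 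The most economical repair is again the simple-function reduction, after which \cite[Theorem~6.3.1]{Kwapien_R_S} -- the very result the paper invokes -- applies verbatim, and then your computation $\E|Z_1Z_2'|^{p_1}=\E|Z_1|^{p_1}\,\E|Z_2|^{p_1}$ recovers the first term of \eqref{est-lemma-2} exactly as in the paper.
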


\begin{pf}
To show \eqref{est-lemma-2} we may and do assume that $h_1$ and $h_2$
are simple functions of the form $h_1=\sum_{j=1}^n a_j \1_{A_j}$ and
$h_2=\sum_{j=1}^n b_j \1_{A_j}$, where $(A_j)_{j=1}^n$ are disjoint
measurable sets and $(a_j)_{j=1}^n, (b_j)_{j=1}^n\subseteq\R$.
We have
\[
\label{} Z_1 Z_2 = \sum_{j,k=1: k\neq j}^n
a_j b_k N(A_j) N(A_k)+\sum
_{k=1}^n a_k b_k
N(A_k)^2=T+D,
\]
and hence
%
\begin{equation}
\label{eq-min} \P\bigl(\vert Z_1Z_2\vert>\lambda\bigr) \leq\P\bigl(\vert
T\vert >\lambda/2\bigr)+ \P\bigl(\vert {D}\vert>\lambda /2\bigr),\qquad \lambda>0.
\end{equation}

For $(u_j)_{j=1}^n\subseteq\R$ set $\mathbf{X}=(u_1 N(A_1),\ldots, u_n
N(A_n))$ and $h=\sum_{j=1}^n u_j \1_{A_j}$. The Euclidean norm on $\R^n$ is denoted $\abs{\mathbf{x}}_n=(\sum_{j=1}^n x_j^2)^{1/2}$. We
claim that for all $p> \alpha$ there exists a constant $C_{1}$, only
depending on $p$, $\alpha$ and $m(S)$, such that
%
\begin{eqnarray}
\label{est-X} \E\abs{\mathbf{X}}_n^p&\leq&
C_{1}\int\abs{h}^{p} \,dm,
\\
\label{est-p} \E\biggl\vert\int_{S} h(s) N(ds)
\biggr\vert^p &\leq& C_{1}\int\abs{h}^{p} \,dm.
\end{eqnarray}
We will show \eqref{est-X} and \eqref{est-p} at the end of this proof. Now
we notice that for $p_2>\alpha/2$ and $u_j=\abs{a_j b_j}^{1/2}$,
$j=1,\ldots,n$ bound \eqref{est-X} yields
%
\begin{equation}
\label{est-D}\qquad \E\abs{D}^{p_2} \leq\E\abs{\mathbf{X}}_n^{2p_2}
\leq C_{1} \int \bigl(\vert h_1 h_2
\vert^{1/2} \bigr)^{2p_2} \,dm=C_{1} \int
\abs{h_1 h_2}^{p_2} \,dm.
\end{equation}
Now let $p_1>\alpha$. By a decoupling inequality (see \cite{KwapienRS},
Theorem~6.3.1), there exists a constant $C_2$, only depending on $p$,
such that
\[
\label{} \E\abs{T}^{p_1} \leq C_2 \int_{\Omega}
\E \biggl(\biggl\vert\int_{S} \phi\bigl(s,\omega'
\bigr) N(ds)\biggr\vert^{p_1} \biggr) \P\bigl(d\omega'\bigr),
\]
where $\phi(s,\omega')=\sum_{j=1}^n \tilde a_j(\omega') \1_{A_j}(s)$
and $\tilde a_j(\omega')=a_j\sum_{k=1: k\neq j}^n b_k N(A_k)(\omega')$.
By~\eqref{est-p} we have
\[
\label{} \E\biggl\vert\int_S \phi\bigl(s,\omega'
\bigr) N(ds)\biggr\vert^{p_1} \leq C_{1} \sum
_{j=1}^n \abs{a_j}^{p_1}
m(A_j) \Biggl\vert\sum_{k=1: k\neq j}^n
b_k N(A_k) \bigl(\omega'\bigr)
\Biggr\vert^{p_1},
\]
and hence by another application of \eqref{est-p},
%
\begin{equation}
\label{est-I23} \E\abs{T}^{p_1} \leq C_1^2
C_2 \int\abs{h_1}^{p_1} \,dm\int\abs
{h_2}^{p_1} \,dm.
\end{equation}
Combining \eqref{eq-min}, \eqref{est-D} and \eqref{est-I23} with
Markov's inequality we get \eqref{est-lemma-2}.

To show \eqref{est-X} we use Rosi\'nski and Turner \cite
{Rosinski-Turner}. Notice that the
L\'evy measure of $\mathbf{X}$ is given by
%
\begin{eqnarray}
\label{def-nu}\quad  \nu(B)=\ \frac{1}{2} k_\alpha\int
_{-1}^1 \biggl(\int_{\R^n}
\1_{B}(r\theta) \kappa (d\theta) \biggr) |r|^{-1-\alpha} \,dr\qquad  B \in
\mathcal{B}\bigl(\R^n\bigr),
\end{eqnarray}
where $\kappa=\sum_{j=1}^n m(A_j)\delta_{u_j \mathbf{e}_j}$, and
$(\mathbf{e}_j)_{j=1}^n$ is the standard basis in $\R^n$.
For all $l>0$ set
\begin{eqnarray*}
\label{} \xi_p(l) &= & \int_{\R^n} \bigl\vert x
l^{-1}\bigr\vert_n^p\1_{\{\vert x
l^{-1}\vert_n>1\}
} \nu(dx)+ \int
_{\R^n} \bigl\vert x l^{-1}\bigr\vert_n^2
\1_{\{\vert x
l^{-1}\vert_n\leq1\}
} \nu(dx)
\\
&=& V_1(l)+V_2(l).
\end{eqnarray*}
According to \cite{Rosinski-Turner}, Theorem 4, $c_p l_p \le(\E\abs
{\mathbf{X}}_n^p)^{1/p} \le C_p l_p$ for some constants $c_p, C_p$
depending only on $p$, where $l=l_p$ is the unique solution of the
equation $\xi_p(l)=1$. From the above decomposition we have either
$V_1(l_p) \ge1/2$ or $V_2(l_p) \ge1/2$. In the first case
\[
\frac{1}{2} \le V_1(l_p) \le\int
_{\R^n} \bigl\vert x l_p^{-1}
\bigr\vert_n^p \nu (dx) = C_{3}
l_p^{-p} \int\abs{h}^{p} \,dm,
\]
where $C_{3}=k_\alpha/(p-\alpha)$. Thus
\[
\label{} \E\abs{\mathbf{X}}_n^p \le2C_p^p
C_3 \int\abs{h}^{p} \,dm,
\]
proving \eqref{est-X}. If $V_2(l_p) \ge1/2$, then we consider two
cases. First assume that $p\in(\alpha,2]$. We have
\[
\frac{1}{2} \le V_2(l_p) \le\int
_{\R^n} \bigl\vert x l_p^{-1}
\bigr\vert_n^p \nu (dx) = C_3 l_p^{-p}
\int\abs{h}^{p} \,dm,
\]
which yields \eqref{est-X} as above. Now we assume that $p>2$. We get
\[
\frac{1}{2} \le V_2(l_p) \le\int
_{\R^n} \bigl\vert x l_p^{-1}
\bigr\vert_n^2 \nu (dx) = C_4 l_p^{-2}
\int\abs{h}^{2} \,dm,
\]
where $C_4=k_\alpha/(2-\alpha)$. Applying Jensen's inequality to the
last term we get
\[
\frac{1}{2} \le C_4 m(S)^{1-2/p} l_p^{-2}
\biggl(\int\vert h\vert^p \,dm \biggr)^{2/p},
\]
which yields the desired bound for $l_p$, establishing \eqref{est-X}
for all $p>\alpha$. The proof of \eqref{est-p} is similar, and it is
therefore omitted. This completes the proof of the lemma.
\end{pf}
%

\begin{remark}\label{DD}
In a recent paper Davydov and Dombry \cite{LePage-series} obtained
sufficient conditions
for the uniform convergence in $D[0,1]$ of the LePage series \eqref
{sas}, which in turn yield criteria for a symmetric stable process to
have \ca\ paths. Their result is a special case of our Theorem \ref
{pro-ca} combined with Corollary \ref{cor-stable0}, when one takes
$p_1=p_2=2$ and assumes additionally that $\E\norm{f(\cdot,
V)}^\alpha
<\infty$. The methods are also different from ours.

In our approach, we established the existence of a \ca\ version first,
using special distributional properties of the process. Then the
uniform convergence of the LePage series, and also of other shot noise
series expansions, follows automatically by Corollary \ref{corsym}.
This strategy applies to other infinitely divisible processes as well.
Here we provided only an example of possible applications of the
results of Section \ref{series-rep}.
\end{remark}

\section*{Acknowledgments}
The authors are grateful to J\o rgen Hoffmann-J\o rgensen for
discussions and interest in this work and to the anonymous referee for
careful reading of the manuscript and helpful suggestions.

%

\printaddresses

\end{document}